\theoremstyle{plain}
\newtheorem{prop}{Proposition}[section]
\newtheorem{thm}[prop]{Theorem}
\newtheorem{cor}[prop]{Corollary}
\newtheorem{lem}[prop]{Lemma}
\theoremstyle{definition}
\newtheorem{dfn}[prop]{Definition}
\newtheorem{rem}[prop]{Remark}
\newtheorem{lab}[prop]{}
\newtheorem{ex}[prop]{Example}
\newcommand{\C}{{\mathbb{C}}}
\renewcommand{\P}{{\mathbb{P}}}
\newcommand{\R}{{\mathbb{R}}}
\newcommand{\N}{{\mathbb{N}}}
\newcommand{\Z}{{\mathbb{Z}}}
\newcommand{\scrF}{{\mathscr{F}}}
\newcommand{\scrU}{{\mathscr{U}}}
\newcommand{\x}{\texttt{x}}
\newcommand{\sfS}{\mathsf{S}}
\newcommand{\sfH}{\mathsf{H}}
\renewcommand{\H}{\mathcal{H}}
\DeclareMathOperator{\im}{im}
\DeclareMathOperator{\interior}{int}
\DeclareMathOperator{\relint}{relint}
\DeclareMathOperator{\rk}{rk}
\DeclareMathOperator{\spn}{span}
\DeclareMathOperator{\conv}{conv}
\DeclareMathOperator{\suppface}{suppface}
\DeclareMathOperator{\id}{id}
\DeclareMathOperator{\Newt}{Newt}
\DeclareMathOperator{\Hom}{Hom}
\DeclareMathOperator{\Gram}{Gram}
\DeclareMathOperator{\Ex}{Ex}
\renewcommand{\subset}{\subseteq}
\newcommand{\du}{{\scriptscriptstyle\vee}}
\newcommand{\ol}[1]{\overline{#1}}
\newcommand{\plus}{{\scriptscriptstyle+}}
\renewcommand{\setminus}{\smallsetminus}
\renewcommand{\epsilon}{\varepsilon}
\renewcommand{\theta}{\vartheta}
\renewcommand{\choose}[2]{\genfrac(){0pt}{}{#1}{#2}}
\begin{document}

\title
[Polyhedral faces in Gram spectrahedra of binary forms] 
{Polyhedral faces in Gram spectrahedra \\ of binary forms}

\author
 {Thorsten Mayer}
\address
 {Fachbereich Mathematik und Statistik, Universit\"at Konstanz,
 78457 Konstanz, Germany}
\email
 {thorsten.mayer@uni-konstanz.de}

\begin{abstract}
We analyze both the facial structure of the Gram spectrahedron $\Gram(f)$ and of the Hermitian Gram spectrahedron $\H^\plus(f)$ of a nonnegative binary form $f \in \R[x, y]_{2d}$.  
We show that if $F \subset \H^\plus(f)$ is a polyhedral face of dimension $k$ then $\choose{k+1}{2} \leq d$. 
Conversely, for all $k \in \N$ and $d \geq \choose{k+1}{2}$ we show that the Hermitian Gram spectrahedron of a general positive binary form $f \in \R[x, y]_{2d}$ with distinct roots contains a face $F$ which is a $k$-simplex and whose extreme points are rank-one tensors. 
For all $k \in \N$ and $d \geq (k+1)^2$ the (symmetric) Gram spectrahedron of a general positive binary form $f \in \R[x, y]_{2d}$ contains a polyhedral face $F$ with $(\rk(F), \dim(F)) = (2(k+1), k)$.  
\end{abstract}

\thanks
{I was supported by a doctoral scholarship of Studienstiftung des deutschen Volkes. \\
This work is part of my ongoing doctoral research project.
I wish to thank my doctoral supervisor Claus Scheiderer for his continuous help and support.
I would also like to thank Christoph Schulze and Julian Vill for helpful discussions on the subject.}

\maketitle


\section*{Introduction}
A form $f \in \R[\x] = \R[x_0, \dots, x_n]$ is a \emph{sum of squares} if $f = p_1^2 + \dots + p_r^2$ for some $p_1, \dots, p_r \in \R[\x]$. 
If $f$ is known to be a sum of squares, there are usually many inequivalent ways of representing $f$ as such. 
The Gram spectrahedron $\Gram(f)$ parametrizes the sum-of-squares representations of $f$, modulo orthogonal equivalence. 
We can understand this spectrahedron as the intersection of the cone of positive semidefinite (\emph{psd}) matrices with an affine-linear space as Chua, Plaumann, Sinn and Vinzant \cite{CPSV} do in their survey on Gram spectrahedra. 
Or, equivalently, we can use a coordinate-free approach introduced by Scheiderer in \cite{Sch} and consider Gram spectrahedra as sets of psd symmetric tensors.    
We will recall his methods later on.

In this paper we focus on the facial structure of Gram spectrahedra of binary forms.
We examine the relationship between rank and dimension of faces $F \subset \Gram(f)$ and we show which pairs $(\rk(F), \dim(F))$ can occur. 
In particular, we will observe large dimension gaps.

Another possibility to certify nonnegativity of a form $f \in \R[\x]$ is to write it as a \emph{Hermitian sum of squares},  
i.e.~$f = p_1 \ol{p_1} + \dots + p_r \ol{p_r}$, where $p_1, \dots, p_r \in \C[\x]$. 
The Hermitian Gram spectrahedron $\H^\plus(f)$, which parametrizes the representations of $f$ as a Hermitian sum of squares, is an interesting object of convex algebraic geometry on its own terms and is also discussed in \cite{CPSV}. 
But it turns out that we can also use it to gain a better understanding for the symmetric Gram spectrahedron.

As Laurent and Poljak write in \cite{LP}, a polyhedral face is, in some sense, the most 'nonsmooth part' of the boundary of a spectrahedron.
In addition, polyhedra are the simplest examples of spectrahedra.  
For these reasons, we are interested in polyhedral faces of Gram spectrahedra. 
We give bounds on the dimension of such faces.  
Conversely, we show that the Hermitian Gram spectrahedron of a general positive binary form contains a simplex face of the largest possible dimension (Theorem \ref{thm:generic_herm_case}).
In \autoref{sec:polyhedral_faces_real}, we use our findings from the Hermitian case to show an analogous result in the real symmetric case.


\section{Dimensions of faces in the symmetric Gram spectrahedron}
\label{sec:real_Gram_spectrahedra}

In this section we determine possible dimensions of a face of $\Gram(f)$ as a function of its rank. 
To begin with, we briefly recall the coordinate-free approach to Gram spectrahedra introduced by Scheiderer. 
For details and explanations see Section 2 and 3 in \cite{Sch}.

\begin{lab} 
Let $A$ be an $\R$-algebra and let $\sfS_2 A \subset A \otimes A$ denote the space of symmetric tensors, i.e.~tensors that are invariant under the involution $p \otimes q \mapsto q \otimes p$.
The multiplication map $\mu \colon A \times A \to A$, $(p, q)\mapsto pq$ is $\R$-bilinear and induces an $\R$-linear map $\sfS_2 A \to A$.
Let $V \subset A$ be a subspace of finite dimension. 
Given $f \in A$, the symmetric tensors $\theta \in \sfS_2 V$ with $\mu(\theta) = f$ are called the \emph{Gram tensors} of $f$, relative to $V$.
After fixing a linear basis of $V$, the space $\sfS_2 V$ gets identified with the space of symmetric $N \times N$-matrices ($N = \dim(V)$). 
The fact that every real symmetric matrix can be diagonalized implies that every $\theta \in \sfS_2 V$ can be written as $\sum_{i=1}^r \epsilon_i (p_i \otimes p_i)$, with $r \geq 0$, $\epsilon_i = \pm 1$ and $p_1, \dots, p_r \in V$ linearly independent. 
In this case, the \emph{image} of $\theta$ is $\im(\theta) = \spn(p_1, \dots, p_r) \subset V$ and the \emph{rank} of $\theta$ is $\rk(\theta) = \dim \im(\theta) = r$. 
Furthermore, $\theta$ is \emph{positive semidefinite (psd)}, written $\theta \succeq 0$, if and only if all $\epsilon_i = 1$.
We write $\sfS_2^\plus V = \{ \theta \in \sfS_2 V : \theta \succeq 0 \}$ for the cone of psd tensors.  
The \emph{(symmetric) Gram spectrahedron} of $f$, relative to $V$, is the set of all positive semidefinite Gram tensors of $f$ in $\sfS_2 V$, i.e. \[
\Gram_V(f) := \sfS_2^\plus V \cap \mu^{-1}(f).
\]
\end{lab}  

\begin{lab}
Ramana and Goldman \cite{RG} provide useful tools for the analysis of the facial structure of spectrahedra. 
In his coordinate-free review of their results, Scheiderer calls a linear subspace $U$ of $V$ \emph{facial}, or a \emph{face subspace} (for the given spectrahedron $\Gram_V(f)$), if there exists $\theta \in \Gram_V(f)$ with $U = \im(\theta)$. 
By Proposition 2.10 in \cite{Sch}, there is a natural inclusion-preserving bijetion between the nonempty faces $F$ of $\Gram_V(f)$ and the face subspaces $U \subset V$ for $\Gram_V(f)$, given by 
\begin{IEEEeqnarray*}{rCl}
F & \mapsto & \scrU(F) := \sum_{\theta \in F} \im(\theta), \\
\{ \theta \in \Gram_V(f) : \im(\theta) \subset U \} =: \scrF(U) & \mapsfrom & U.
\end{IEEEeqnarray*}
Scheiderer also mentions the following consequences of this result:
Given a face subspace $U \subset V$, the relative interior of $\scrF(U)$ is $\{\theta \in \Gram_V(f) : \im(\theta) = U\}$. 
Conversely, if $F \subset \Gram_V(f)$ is a face, then $\im(\theta) = \scrU(F)$ and $\rk(\theta) = \dim \scrU(F)$ for every $\theta \in \relint(F)$.
This number is called the \emph{rank} of $F$, denoted $\rk(F)$.
The supporting face of $\theta \in \Gram_V(f)$ is $\scrF(\im(\theta))$.
\end{lab}

We recall another previously known result that is also crucial for this paper.

\begin{prop}[see \cite{Sch}, Prop.~3.6] 
For $U \subset V$ a face subspace for $f$, the face $\scrF(U)$ of $\Gram_V(f)$ has dimension \[
\dim \scrF(U) = \frac{r}{2}(r+1) - s
\]
with $r = \dim(U)$ and $s = \dim(UU)$. 
Here, $UU$ denotes the linear subspace of $A$ spanned by the products $pq$ ($p, q \in U$).
\end{prop}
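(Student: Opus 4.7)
The plan is to realize $\scrF(U)$ as the intersection of the psd cone in $\sfS_2 U$ with a fiber of the multiplication map restricted to $\sfS_2 U$, and then observe that $\scrF(U)$ is full-dimensional inside that affine fiber because $U$ being a face subspace supplies a positive definite element of $\sfS_2 U$.

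First I would identify the ambient affine space of $\scrF(U)$. A psd tensor $\theta \in \sfS_2 V$ written as $\sum_{i=1}^r p_i\otimes p_i$ with $p_i$ linearly independent satisfies $\im(\theta)\subset U$ if and only if all $p_i\in U$, that is, if and only if $\theta\in\sfS_2 U$. Consequently
\[
\scrF(U) \;=\; \sfS_2^\plus U \,\cap\, \mu^{-1}(f),
\]
where I view $\sfS_2 U$ as a subspace of $\sfS_2 V$ in the obvious way.

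Next I would compute the dimension of $\mu^{-1}(f)\cap \sfS_2 U$. The restriction $\mu\colon \sfS_2 U\to A$ is $\R$-linear and its image is by definition the $\R$-span $UU$ of all products $pq$ with $p,q\in U$, so it is a surjection $\sfS_2 U \twoheadrightarrow UU$. Since $U$ is a face subspace, there exists $\theta_0\in\Gram_V(f)$ with $\im(\theta_0)=U$, and this $\theta_0$ lies in $\sfS_2^\plus U\cap\mu^{-1}(f)$; in particular the fiber over $f$ is nonempty, hence an affine subspace of dimension
\[
\dim \sfS_2 U - \dim UU \;=\; \tfrac{r(r+1)}{2} - s.
\]

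Finally I would argue that $\scrF(U)$ attains this dimension, i.e.\ has nonempty interior inside the affine fiber. The element $\theta_0$ above has $\im(\theta_0)=U$, so viewed inside $\sfS_2 U$ it is strictly positive definite; positive definiteness is an open condition on $\sfS_2 U$, so a neighborhood of $\theta_0$ in $\mu^{-1}(f)\cap\sfS_2 U$ is contained in $\scrF(U)$. Therefore $\scrF(U)$ is a full-dimensional subset of $\mu^{-1}(f)\cap\sfS_2 U$, and its dimension equals $\tfrac{r(r+1)}{2}-s$.

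The only real subtlety is the last step, where one must invoke that $U$ being a face subspace guarantees a positive definite (not just psd) representative in $\sfS_2 U$, so the psd constraint does not cut down the dimension below that of the affine fiber; everything else is a linear-algebra computation.
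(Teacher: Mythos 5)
Your proof is correct. Note that the paper itself does not reprove this statement — it cites it from Scheiderer's Prop.~3.6 — but the paper \emph{does} prove the Hermitian analogue (the dimension formula for faces of $\H^\plus(f)$) and your argument is structurally identical to that proof: identify $\scrF(U)$ with $\sfS_2^\plus U\cap\mu^{-1}(f)$, restrict the multiplication map to $\sfS_2 U$, observe its image is $UU$, and apply rank--nullity. Where you go slightly further is the last paragraph: the paper's Hermitian proof simply asserts that ``the dimension of the convex set $\scrF(U)$ is the dimension of the (real) affine space $\mu^{-1}(f)\cap\sfH_2 U$,'' whereas you supply the justification that a face subspace yields a $\theta_0\in\sfS_2 U$ with $\im(\theta_0)=U$, i.e.\ positive definite inside $\sfS_2 U$, so the psd constraint does not cut the dimension. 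That is exactly the point that needs the hypothesis ``$U$ is a face subspace,'' and it is good that you made it explicit rather than glossing over it.
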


From now on, we will work with binary forms. 
We write $\Sigma_{2d} = \Sigma \R[x, y]_d^2$ for the sums-of-squares cone in $\R[x, y]_{2d}$. 
For $f \in \Sigma_{2d}$ let $\Gram(f)$ be the Gram spectrahedron of $f$, i.e.~$\Gram(f) := \Gram_V(f)$ with $V = \R[x, y]_d$.  
Note that $\dim \Gram(f) = \choose{d}{2}$ if and only if $f \in \interior(\Sigma_{2d})$. 
Otherwise, by Proposition 5.5 in the paper by Choi, Lam and Reznick  \cite{CLR}, $\Gram(f)$ does not contain any positive definite tensor (i.e.~any tensor of rank $\dim(V) = d+1$). \\
Let $f \in \Sigma_{2d}$ and let $F \subset \Gram(f)$ be a face of rank $r$. 
Let $U = \scrU(F) \subset \R[x, y]_d$ denote the corresponding face subspace.
Since $UU \subset \R[x, y]_{2d}$, the dimension of $F = \scrF(U)$ is at least $\choose{r+1}{2} - (2d+1)$. 
Below, we prove an upper bound.  \\
Essentially, via $f(x, y) \mapsto f(x, 1)$ and $g(x) \mapsto y^{d}g(\frac{x}{y})$, a binary form of degree $d$ is nothing but a univariate polynomial of degree at most $d$. 
Therefore, we will occasionally work with univariate polynomials to simplify notation.

\begin{prop} \label{prop:UU_geq_2r-1}
Let $U \subset \R[x]$ be a subspace of dimension $r$. 
Then the dimension of $UU$ is at least $2r-1$.
\end{prop}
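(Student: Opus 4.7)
The plan is to exhibit $2r-1$ explicit elements of $UU$ that are linearly independent by a degree argument, which will immediately give the bound.

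First I would choose a convenient basis of $U$. By Gaussian elimination on leading coefficients, any $r$-dimensional subspace $U \subset \R[x]$ has a basis $p_1, \ldots, p_r$ whose degrees are pairwise distinct; after reordering we may assume
\[
\deg(p_1) < \deg(p_2) < \cdots < \deg(p_r).
\]
This is the only preparatory step; the rest is about reading off degrees of products.

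Next I would look at the following $2r-1$ products in $UU$:
\[
p_1 p_1,\ p_1 p_2,\ \ldots,\ p_1 p_r,\ p_2 p_r,\ p_3 p_r,\ \ldots,\ p_r p_r.
\]
Their degrees form the strictly increasing chain
\[
2\deg(p_1) < \deg(p_1)+\deg(p_2) < \cdots < \deg(p_1)+\deg(p_r) < \deg(p_2)+\deg(p_r) < \cdots < 2\deg(p_r),
\]
since at each step exactly one index in the product is increased to the next strictly larger degree. Polynomials of pairwise distinct degrees are linearly independent, so these $2r-1$ products are linearly independent in $UU$, giving $\dim(UU) \geq 2r-1$.

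I do not expect a genuine obstacle here: the argument is purely formal, and the only thing one has to be a little careful about is that the chosen products really do have strictly increasing degrees, which is just bookkeeping. It may be worth remarking that the bound is sharp, as attained by $U = \spn(1, x, \ldots, x^{r-1})$, for which $UU = \spn(1, x, \ldots, x^{2r-2})$ has dimension exactly $2r-1$; this will be relevant when the later discussion considers when equality can hold.
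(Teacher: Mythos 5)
Your proof is correct and takes essentially the same approach as the paper: choose a basis of $U$ with strictly increasing degrees and exhibit $2r-1$ products in $UU$ of pairwise distinct degree. The only cosmetic difference is the path through the index grid — you go along the top row then down the last column ($p_1p_1, p_1p_2, \dots, p_1p_r, p_2p_r, \dots, p_rp_r$), while the paper zigzags down the diagonal ($p_1p_1, p_1p_2, p_2p_2, p_2p_3, \dots, p_rp_r$) — but both lists have strictly increasing degrees and serve identically.
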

\begin{proof}
We can choose a basis $p_1, \dots, p_r$ of $U$ such that $\deg(p_i) < \deg(p_{i+1})$ for all $i = 1, \dots, r-1$. 
Then \[ 
p_1 p_1, p_1 p_2, p_2 p_2, p_2 p _3, p_3 p_3, \dots, p_{r-1} p_r, p_r p_r
\]
are $2r-1$ polynomials in $UU$ of pairwise different degree. 
Therefore, they are linearly independent.
\end{proof}

In other words, if $U \subset \R[x,y]_d$ is a subspace of codimension $k$, then the codimension of $UU$ in $\R[x,y]_{2d}$ is at most $2k$.

\begin{cor} \label{cor:max_dim_of_faces_real}
Let $f \in \R[x,y]_{2d}$ be a nonnegative binary form and let $F \subset \Gram(f)$ be a face of rank $r$.
Then $\dim(F) \leq \choose{r-1}{2}$. 
\end{cor}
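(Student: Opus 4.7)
The plan is to chain together the two facts just recalled and proved: Scheiderer's dimension formula $\dim \scrF(U) = \binom{r+1}{2} - s$ for faces of $\Gram(f)$, and the lower bound $\dim(UU) \geq 2r - 1$ from Proposition \ref{prop:UU_geq_2r-1}. Concretely, I would invoke the Ramana--Goldman/Scheiderer bijection recalled above to associate to $F$ its face subspace $U := \scrU(F) \subset \R[x, y]_d$, so that $F = \scrF(U)$ and $\dim(U) = \rk(F) = r$. The dimension formula then yields
\[
\dim(F) \;=\; \binom{r+1}{2} - s, \qquad s := \dim(UU).
\]

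To bound $s$ from below via Proposition \ref{prop:UU_geq_2r-1}, I need to pass from binary forms to univariate polynomials, since the proposition is phrased in $\R[x]$. The dehomogenization $p(x, y) \mapsto p(x, 1)$ is an $\R$-linear isomorphism $\R[x, y]_d \stackrel{\sim}{\to} \R[x]_{\leq d}$ compatible with products; writing $\tilde U \subset \R[x]$ for the image of $U$, we have $\dim(\tilde U) = r$ and $\dim(\tilde U \tilde U) = \dim(UU)$. Hence $s \geq 2r - 1$, and substituting gives
\[
\dim(F) \;\leq\; \binom{r+1}{2} - (2r - 1) \;=\; \frac{r^2 - 3r + 2}{2} \;=\; \binom{r-1}{2}.
\]

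There is no real obstacle here: the substance of the corollary is entirely packaged into the two inputs. The only point requiring mild care is the bookkeeping in translating between $\R[x, y]_d$ and $\R[x]_{\leq d}$, a translation the paper has already flagged explicitly; everything else is an arithmetic simplification.
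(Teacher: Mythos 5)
Your proof is correct and follows exactly the paper's argument: apply Scheiderer's dimension formula to the face subspace $U = \scrU(F)$ and then use Proposition \ref{prop:UU_geq_2r-1} to bound $\dim(UU)$ below by $2r-1$. The explicit bookkeeping for the dehomogenization $\R[x,y]_d \to \R[x]_{\leq d}$ is a minor point the paper leaves implicit, but there is no difference in substance.
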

\begin{proof}
Consider the subspace $U = \scrU(F)$ of $ \R[x,y]_d$. By Proposition \ref{prop:UU_geq_2r-1},
\[
\dim(F) = \choose{r+1}{2} - \dim(UU) \leq \choose{r+1}{2} - (2r-1) = \choose{r-1}{2}.  \qedhere
\]
\end{proof}

\begin{rem} 
Let $\rk(F) = r$ and write $k = (d+1)-r$, i.e.~$k = \dim \ker(\theta)$ for $\theta \in \relint(F)$. 
Thereby, we can represent both inequalities for the dimension of $F$ in a uniform way:
 \[
\choose{d-k}{2} - 2k \leq \dim(F) \leq \choose{d-k}{2}. 
\]
\end{rem}

\begin{lab}
Let $f \in \Sigma_6$ ($d = 3$). 
By Corollary \ref{cor:max_dim_of_faces_real}, a proper face of $\Gram(f)$ has dimension at most $\choose{d-1}{2} = 1$.
In fact, Scheiderer has shown that $\Gram(f)$ has no faces of dimension $1$ if $f$ is strictly positive (\cite{Sch}, Prop.~5.3). 
In other words, if $\Gram(f)$ has a rank-$d$ face $F$ of dimension $\choose{d-1}{2}$, i.e.~$U = \scrU(F)$ attains the lower bound $\dim(UU) = 2d-1$,
then $f(\xi) = 0$ for some $\xi \in \P^1(\R)$ and therefore $\Gram(f) = F$. 
We will show that this generalizes to arbitrary degree.
The main part of the proof is the following lemma. 
\end{lab}

\begin{lem} \label{lem:max_face_dim_means_root}
Let $d \geq 3$ and let $U \subset \R[x]_{\leq d}$ be a $d$-dimensional subspace.
If $1 \notin U$ and $\dim(UU) = 2d-1$ then the elements of $U$ have a common real root. 
\end{lem}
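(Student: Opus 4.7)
The plan is to exploit a degree-stratified basis of $U$. Since $1 \notin U$ and $\dim U = d$ inside $\R[x]_{\leq d}$ (of dimension $d+1$), the argument used in the proof of Proposition~\ref{prop:UU_geq_2r-1} yields a basis $p_1, \ldots, p_d$ of $U$ with strictly increasing degrees, which by a dimension count must satisfy $\deg p_i = i$. In particular $p_1$ is a nonzero linear polynomial, which we may take monic, say $p_1 = x - \alpha$ with $\alpha \in \R$. The conclusion of the lemma is then equivalent to $p_1 \mid p_k$ in $\R[x]$ for every $k = 2, \ldots, d$, and this is what I aim to prove.

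Under the hypothesis $\dim UU = 2d - 1$, the $2d - 1$ monic products $\{p_i^2\}_{i=1}^d \cup \{p_i p_{i+1}\}_{i=1}^{d-1}$ from Proposition~\ref{prop:UU_geq_2r-1} form a basis of $UU$, with exactly one basis element of each degree $n \in \{2, \ldots, 2d\}$. Every other product $p_i p_j$ (with $|i - j| \geq 2$) therefore expands in this basis, and comparing monic leading terms forces the unique degree-$(i+j)$ basis element to appear with coefficient $1$. This yields relations of the shape
\[
p_i p_j = p_a p_b + r_{ij},
\]
where $\{a, b\} = \{\lfloor (i+j)/2 \rfloor, \lceil (i+j)/2 \rceil\}$ and $r_{ij}$ is a linear combination of basis elements of degree strictly less than $i + j$.

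I would then prove $p_1 \mid p_k$ by induction on $k$. For $2 \leq k \leq d - 1$, I apply the relation with $(i, j) = (k - 1, k + 1)$, so that $p_a p_b = p_k^2$ and $r$ is a combination of basis elements $p_u p_v$ with $u + v \leq 2k - 1$ and $|u - v| \leq 1$. The largest index that can occur in such a basis term is $k$, realised only by $p_{k-1} p_k$; this term is divisible by $p_1$ through $p_{k-1}$ by the inductive hypothesis, and every other basis term has both indices at most $k-1$, hence is divisible by $p_1$ for the same reason. The left-hand side $p_{k-1} p_{k+1}$ is also divisible by $p_1$ through $p_{k-1}$. It follows that $p_1 \mid p_k^2$, and since $p_1$ is linear (hence prime) in $\R[x]$, we obtain $p_1 \mid p_k$.

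The main obstacle is the endpoint $k = d$, where $p_{d+1}$ is unavailable and the symmetric middle-index relation breaks down. The remedy is to apply the relation to the asymmetric pair $(1, d)$: the degree-$(d+1)$ basis element has maximum index $\lceil (d+1)/2 \rceil$, and the assumption $d \geq 3$ is precisely what ensures $\lceil (d+1)/2 \rceil \leq d - 1$, so that every index appearing in $p_a p_b$ or in $r_{1,d}$ is strictly less than $d$. By the inductive hypothesis, every summand on the right-hand side is then divisible by $p_1^2$, so $p_1^2 \mid p_1 p_d$ and therefore $p_1 \mid p_d$. Once the induction is complete, every $p_k$ vanishes at $\alpha$, so $\alpha$ is the desired common real root of all elements of $U$.
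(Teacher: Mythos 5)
Your proof is correct, and it takes a genuinely different route from the paper's. Both arguments start from the same degree-stratified basis $p_1,\dots,p_d$ with $\deg p_i=i$ (which exists precisely because $1\notin U$ and $\dim U=d$ in the $(d+1)$-dimensional ambient space) and both exploit the fact that, under $\dim UU = 2d-1$, the products $p_i^2$ and $p_ip_{i+1}$ form a basis of $UU$ graded by degree. From there the paper inducts on $d$: it passes to $U'=\spn(p_1,\dots,p_{d-1})\subset\R[x]_{\leq d-1}$, checks $\dim(U'U')=2(d-1)-1$, applies the lemma recursively to obtain a common root $a$ of $p_1,\dots,p_{d-1}$, and then observes (degree argument, needing $d\geq 3$) that $p_1p_d$ lies in $U'U'$, whence $(x-a)^2\mid (x-a)p_d$ and $p_d(a)=0$. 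You instead keep $U$ fixed and run a single induction on the basis index $k$, showing $p_1\mid p_k$: for $2\leq k\leq d-1$ you expand $p_{k-1}p_{k+1}=p_k^2+r$ and use primality of $p_1$ on $p_k^2$, and for $k=d$ you expand $p_1p_d$ and use $d\geq 3$ to see all summands on the right involve only indices $\leq d-1$, forcing $p_1^2\mid p_1p_d$. Your inductive step and the paper's final step are really the same degree observation in different clothing, but your organization has the advantage of being fully self-contained (the paper relegates the base case $d=3$ to an external reference), and it avoids re-verifying the hypotheses on the smaller space $U'$ at each stage. The paper's structure is more modular and generalizes the recursion pattern used elsewhere in the section, but yours is arguably the cleaner standalone argument.
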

\begin{proof}
The case $d = 3$ is discussed in the previous remark.
We assume $d \geq 4$ and proceed by induction on $d$.
We can choose a basis $(p_1, \dots, p_d)$ of $U$ where $p_i = x^i + \lambda_i$ for some $\lambda_i \in \R$.
Consider $U' := \spn(p_1, \dots, p_{d-1}) \subset \R[x]_{\leq d-1}$. 
Since $\deg(p_d) = d$ is bigger than the degree of any element in $U'$, the proof of Proposition \ref{prop:UU_geq_2r-1} shows that $\dim(U'U') \leq \dim(UU)-2 = 2(d-1)-1$.
The right hand side coincides with the lower bound from \ref{prop:UU_geq_2r-1}, so we must have equality.
By induction, $p_1, \dots, p_{d-1}$ have a common root $a \in \R$.
The choice of a simple basis for $U$ makes it easy to read off bases for $U'U'$ and $UU$:
\[
p_1 p_1, p_1 p_2, p_2 p_2, \dots, p_{d-2} p_{d-1}, p_{d-1} p_{d-1} \]
are $2(d-1)-1 = \dim(U'U')$ elements of $U'U'$ of pairwise different degrees, so they constitute a basis for $U'U'$ which is completed to a basis of $UU$ by $p_{d-1} p_d$ and $p_d p_d$.
We consider the representation of $p_1 p_d \in UU$ with respect to this basis. 
Since $d \geq 3$ and for reasons of degree, the coefficients of $p_{d-1} p_d$ resp.~$p_d p_d$ are zero. 
Hence $p_1 p_d \in U'U'$. 
But all elements of $U'U'$ have a double root in $a$, so $(x-a)^2$ divides $p_1 p_d = (x-a) p_d$. 
Therefore, also $p_d(a) = 0$.
\end{proof}

\begin{prop} \label{prop:max_face_dim_means_root}
Let $d \geq 3$ and let $U \subset \R[x,y]_d$ be a $d$-dimensional subspace.
If $\dim(UU) = 2d-1$ then there exists a $\xi \in \P^1(\R)$ such that $p(\xi) = 0$ for all $p \in U$. 
\end{prop}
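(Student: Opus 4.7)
The plan is to reduce the statement to the univariate situation of Lemma~\ref{lem:max_face_dim_means_root} by dehomogenization. The evaluation $p(x,y) \mapsto p(x,1)$ is a vector space isomorphism $\R[x,y]_d \to \R[x]_{\leq d}$, and likewise on degree $2d$; it sends $U$ to a $d$-dimensional subspace $U' \subset \R[x]_{\leq d}$, and sends $UU$ to $U'U'$, so $\dim(U'U') = 2d-1$ as well. To invoke the lemma I need the additional hypothesis $1 \notin U'$, which translates to $y^d \notin U$. Once this is arranged, the lemma produces a common real root $a \in \R$ of all elements of $U'$, i.e.\ $p(a,1) = 0$ for every $p \in U$, so $\xi = [a:1] \in \P^1(\R)$ is the desired common projective zero.

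The remaining task is to secure $y^d \notin U$ by a suitable real linear change of variables. After replacing $(x,y)$ by $(x', y')$ via a matrix in $\mathrm{GL}_2(\R)$, the condition ``$y^d \notin U$'' in the new coordinates amounts to the existence of a real $d$-th power $(\alpha x + \beta y)^d$ outside $U$ in the original coordinates. Such a power exists because $U$ is a proper subspace of $\R[x,y]_d$ while the real $d$-th powers span all of $\R[x,y]_d$: for instance, $(x + t_i y)^d$ for $d+1$ pairwise distinct real values $t_0, \dots, t_d$ are linearly independent by a Vandermonde argument. Since a real linear change of variables preserves $\dim(U)$, $\dim(UU)$, and real projective zero sets, no generality is lost, and after undoing the substitution the point $\xi$ produced in the first paragraph corresponds to a common zero in $\P^1(\R)$ of the original $U$.

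I expect the main obstacle to be precisely this coordinate-change step, namely finding a real $d$-th power outside $U$; over $\C$ this would be immediate, but over $\R$ it depends on the density statement for real $d$-th powers just mentioned. Once that is in place, the rest is a routine dehomogenization followed by a direct appeal to Lemma~\ref{lem:max_face_dim_means_root}.
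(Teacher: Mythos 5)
Your proof is correct, and it takes a genuinely different and arguably slicker route to the same endpoint. Both you and the paper reduce the statement to Lemma~\ref{lem:max_face_dim_means_root} after dehomogenizing, but you secure the lemma's hypothesis $1 \notin U'$ by a clean global argument: pick a linear form $\ell$ with $\ell^d \notin U$ (possible since the real $d$-th powers span $\R[x,y]_d$ by a Vandermonde argument, while $U$ is a proper subspace), then change coordinates by an element of $\mathrm{GL}_2(\R)$ so that $\ell$ becomes $y$, which is harmless because $\dim(U)$, $\dim(UU)$, and common zeros in $\P^1(\R)$ are all preserved. The paper instead secures $1 \notin U$ by an inductive case analysis in the spirit of the lemma itself: it separates the cases $(1:0) \in \mathcal{V}(U)$ (all degrees below $d$) and the complementary case, passes to the codimension-one subspace $U'$ spanned by the lower-degree basis elements, applies induction to $U'$, rules out the subcase $\deg(p_i) < d-1$ for all $i$ by a direct dimension count against $\dim(UU)=2d-1$, and concludes $1 \notin U$ from the common root of $U'$. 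Your coordinate-change reduction avoids that case analysis entirely and is probably the more natural thing to try first; the paper's argument is more elementary (no appeal to the spanning of $d$-th powers) and stays closer to the explicit basis manipulations already used in the proof of the lemma. Both are valid.
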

\begin{proof}
We tackle the problem in its univariate formulation. 
For this purpose, we write $U = \spn(p_1, \dots, p_d) \subset \R[x]_{\leq d}$, $\dim(U) = d$, and assume that $\dim(UU) = 2d-1$.
If $\deg(p_i) < d$ for all $i = 1, \dots, d$, then $(1:0) \in \P^1(\R)$ is a common root of the corresponding degree-$d$ forms $y^d \cdot p_i(\frac{x}{y})$. 
Therefore, we may assume that $p_d$ is monic, $\deg(p_d) = d$ and $\deg(p_i) < d$ for all $i < d$.
Consider $U' := \spn(p_1, \dots, p_{d-1}) \subset \R[x]_{\leq d-1}$. 
The same argument as in the proof of \ref{lem:max_face_dim_means_root} shows that $\dim(U'U') = 2(d-1)-1$. 
By induction, either $p_1, \dots, p_{d-1}$ are of degree smaller than $d-1$ or they have a common real root.
Suppose $\deg(p_i) < d-1$ for all $i = 1, \dots, d-1$. 
Then $U'$ is a subspace of $\R[x]_{\leq d-2}$ of dimension $d-1$, so $U' = \R[x]_{\leq d-2}$ and $U'U' = \R[x]_{\leq 2d-4}$.
However, $p_d = x^d + q(x)$ for some $q \in \R[x]_{\leq d-1}$ and $x^{d-3} p_d, x^{d-2} p_d, p_d^2 \in UU$ are of degree $2d-3, 2d-2$ and $2d$, respectively, which would imply that $\dim(UU) \geq 2d$, a contradiction.
We conclude that $p_1, \dots, p_{d-1}$ have a common real root.
In particular, $1 \notin U'$ and since every polynomial in $U \setminus U'$ has degree $d$, also $1 \notin U$.
Finally, Lemma \ref{lem:max_face_dim_means_root} gives the desired conclusion.
\end{proof}

\begin{cor} \label{cor:no_face_of_max_dim}
Let $d \geq 3$. 
Let $f \in \R[x,y]_{2d}$ be positive on $\P^1(\R)$. 
If $F \subset \Gram(f)$ is a face of rank $d$, then $dim(F) < \choose{d-1}{2}$. \\ 
In other words: If $f \in \R[x,y]_{2d}$ is a nonnegative binary form and if $\Gram(f)$ contains a face $F$ of dimension $\choose{d-1}{2}$, then already $F = \Gram(f)$ (and $f$ has a real root).
\end{cor}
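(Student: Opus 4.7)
The plan is to combine Proposition \ref{prop:max_face_dim_means_root} with the dimension formula (Prop.~3.6 of \cite{Sch}) and the rank-vs-dimension bound from Corollary \ref{cor:max_dim_of_faces_real}.

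I would start by fixing a face $F \subset \Gram(f)$ of rank $r$ with face subspace $U = \scrU(F)$ and writing $\dim F = \binom{r+1}{2} - \dim(UU)$. The bound $\dim(UU) \geq 2r-1$ from Proposition \ref{prop:UU_geq_2r-1} forces $\dim F \leq \binom{r-1}{2}$, so the hypothesis $\dim F = \binom{d-1}{2}$ immediately implies $r \geq d$. The rank $r = d+1$ can be excluded cheaply: it forces $U = \R[x,y]_d$, hence $UU = \R[x,y]_{2d}$, hence $\dim F = \binom{d}{2}$, which differs from $\binom{d-1}{2}$ by $d-1 > 0$ for $d \geq 3$. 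So $r = d$, and necessarily $\dim(UU) = 2d-1$, i.e.~the minimum in Proposition \ref{prop:UU_geq_2r-1} is attained.

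Next I would invoke Proposition \ref{prop:max_face_dim_means_root}: since $U \subset \R[x,y]_d$ is $d$-dimensional with $\dim(UU) = 2d-1$, there is a point $\xi \in \P^1(\R)$ such that $p(\xi) = 0$ for every $p \in U$. Picking any $\theta \in \relint(F)$ and writing $\theta = \sum_{i=1}^d p_i \otimes p_i$ with $p_i \in U$, the relation $f = \mu(\theta) = \sum_i p_i^2$ yields $f(\xi) = \sum_i p_i(\xi)^2 = 0$. This already establishes the first formulation of the corollary: if $f$ is strictly positive on $\P^1(\R)$, such a face cannot exist.

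For the second formulation I would use the same $\xi$ to prove $F = \Gram(f)$. Let $U_\xi = \{p \in \R[x,y]_d : p(\xi) = 0\}$, which has dimension $d$. Since $U \subset U_\xi$ and $\dim U = d$, we get $U = U_\xi$. Now for any $\theta' \in \Gram(f)$, writing $\theta' = \sum_j q_j \otimes q_j$ gives $f = \sum_j q_j^2$, and evaluating at $\xi$ (which is a root of $f$ by the previous paragraph) forces every $q_j(\xi) = 0$. Thus $\im(\theta') \subset U_\xi = U$, so $\theta' \in \scrF(U) = F$, proving $\Gram(f) \subset F$ and hence equality.

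I do not expect a real obstacle here, as the corollary is essentially a packaging of the earlier propositions; the only thing to be careful about is the clean case split on $r \in \{d, d+1\}$ and the identification $U = U_\xi$ via a dimension count, both of which are routine.
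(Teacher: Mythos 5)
Your argument for the first formulation is exactly the paper's: choose $\theta \in \relint(F)$ of rank $d$, observe that $\dim(F) = \binom{d-1}{2}$ forces $\dim(UU) = 2d-1$, apply Proposition \ref{prop:max_face_dim_means_root} to get a common real root $\xi$, and conclude $f(\xi) = \sum p_i(\xi)^2 = 0$. The paper stops there and treats the ``in other words'' reformulation as self-evident; you, by contrast, spell it out carefully and correctly: you rule out $r = d+1$ by the dimension count $\binom{d}{2} \neq \binom{d-1}{2}$ (this also follows from the fact that a rank-$(d+1)$ face is the whole spectrahedron, whose dimension is $\binom{d}{2}$ when $f$ is interior), and you identify $U$ with the hyperplane $U_\xi$ of forms vanishing at $\xi$, from which $\im(\theta') \subset U_\xi = U$ for every $\theta' \in \Gram(f)$ and hence $\Gram(f) = \scrF(U) = F$. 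That last step, deducing $F = \Gram(f)$ from the existence of the real root, is the one piece of substance the paper does not write out, and your treatment of it via the dimension count $\dim U_\xi = d = \dim U$ is clean. No gaps.
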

\begin{proof}
We have already seen that $\dim(F) \leq \choose{d-1}{2}$ (cf.~Corollary \ref{cor:max_dim_of_faces_real}). 
Choose $\theta \in \relint(F)$, $\theta = \sum_{i=1}^d p_i \otimes p_i$ with linearly independent $p_1, \dots, p_d \in \R[x,y]_d$.
If we had $\dim(F) = \choose{d-1}{2}$ we would have $\dim(UU) = 2d-1$ for $U = \scrU(F) = \im(\theta)$. 
But then $p_i(\xi) = 0$ for some $\xi \in \P^1(\R)$ ($i=1,\dots,d$) by Proposition \ref{prop:max_face_dim_means_root}, and thus $f(\xi) = p_1(\xi)^2 + \dots + p_d(\xi)^2 = 0$. 
\end{proof}

\begin{ex} \label{ex:dim_diag_real}
The following figure illustrates the possible combinations of $\rk(F)$ and $\dim(F)$ in the Gram spectrahedron of a positive binary form $f \in \Sigma_{2d}$ for $d = 5$ and $d = 8$.
\vspace{0.5em}

\begin{tikzpicture}[decoration=brace]
    \draw(0,0)--(4,0);
    \foreach \x/\xtext in {0/$0$,1.2/$3$,1.6/$4$,2/$5$,2.4/$6$,4/$10$}
      \draw(\x,3pt)--(\x,-3pt) node[above=1ex] {\xtext};
    \draw[decorate, yshift=-1.5ex] (4,0) -- node[below=0.4ex] {$r=6$} (4,0);
	\draw[decorate, yshift=-1.5ex] (2,0) -- node[below=0.4ex] {$r=5$} (1.6,0);
	\draw[decorate, yshift=-1.5ex] (1.2,0) -- node[below=0.4ex] {$r=4$} (0,0);
\end{tikzpicture} 

\vspace{0.5em}

\begin{tikzpicture}[decoration=brace]
	\draw(0,0)--(11.2,0);
	\foreach \x/\xtext in {0/$0$,1.2/$3$,1.6/$4$,2.4/$6$,4/$10$,4.4/$11$,6/$15$,7.6/$19$,8/$20$,8.4/$21$,11.2/$28$}
      \draw(\x,3pt)--(\x,-3pt) node[above=1ex] {\xtext} ;
	\draw[decorate, yshift=-1.5ex] (11.2,0) -- node[below=0.4ex] {$r=9$} (11.2,0);  
	\draw[decorate, yshift=-1.5ex] (8,0) -- node[below=0.4ex] {$r=8$} (7.6,0);
	\draw[decorate, yshift=-1.5ex] (6,0) -- node[below=0.4ex] {$r=7$} (4.4,0);
    \draw[decorate, yshift=-1.5ex] (4,0) -- node[below=0.4ex] {$r=6$} (1.6,0);
    \draw[decorate, yshift=-5.5ex] (2.4,0) -- node[below=0.4ex] {$r=5$} (0,0);
    \draw[decorate, yshift=-1.5ex] (1.2,0) -- node[below=0.4ex] {$r=4$} (0,0);
\end{tikzpicture}
\end{ex}


\section{Hermitian Gram spectrahedra} 
Chua, Plaumann, Sinn and Vinzant \cite{CPSV} also discuss the Hermitian analog to symmetric Gram spectrahedra. 
Similar to Scheiderer's method presented in \autoref{sec:real_Gram_spectrahedra} we pursue a coordinate-free approach to Hermitian Gram spectrahedra.
We also compare dimension bounds in the Hermitian case to those from the real symmetric case.

\begin{dfn}
Let $V$ and $V'$ be complex vector spaces. 
A mapping $\phi \colon V \to V'$ is said to be \emph{antilinear} if \[
\phi(v+w) = \phi(v) + \phi(w) \quad \text{and} \quad \phi(\lambda v) = \bar{\lambda} \phi(v)
\]
for all  $v, w \in V$ and all $\lambda \in \C$. 
An antilinear map $\phi \colon V \to V$ is called \emph{antilinear involution} if $\phi \circ \phi = \id_V$. 
\end{dfn}

\begin{lab}
Let $V$ be a vector space over $\R$. 
The \emph{complexification} $V_\C$ of $V$ is the tensor product of $V$ with the complex numbers, i.e.~$V_\C = V \otimes_\R \C$. 
Then $V_\C$ is a complex vector space and every $v \in V_\C$ can be written uniquely in the form \[
v = v_1 \otimes 1 + v_2 \otimes i
\]
where $v_1, v_2 \in V$.
We will drop the tensor product symbol and simply write $v = v_1 + i v_2$. 
We call $v_1$ the \emph{real part} and $v_2$ the \emph{imaginary part} of $v$.
Multiplication by the complex number $a+ib$ is given by the usual rule \[ 
(a+ib)(v_1+iv_2) = (av_1 - bv_2)+i(bv_1 + av_2).
\]
On $V_\C$ we have a natural antilinear involution $\phi \colon V_\C \to V_\C$ given by $\phi(v_1+iv_2) = v_1-iv_2$.
We also write $\ol{v} := \phi(v)$ for $v \in V_\C$.

If $U \subset V_\C$ is a subspace we define the \emph{complex conjugate} of $U$ to be the subspace $\ol{U} := \phi(U)$. 
We consider the tensor product $U \otimes_\C \ol{U}$ with the antilinear involution \[
U \otimes_\C \ol{U} \to U \otimes_\C \ol{U}, \quad 
v \otimes \ol{w} \mapsto w \otimes \ol{v} \quad (v, w \in U). 
\]
The fixed locus of this map is the real subspace of \emph{Hermitian tensors} in $U \otimes_\C \ol{U}$ which we will denote by $\sfH_2 U$.
\end{lab}

\begin{lab}
A hermitian tensor $\theta = \sum_j v_j \otimes \ol{w_j} \in \sfH_2 U$ ($v_j, w_j \in U$) can be identified with a Hermitian sesquilinear form $b_\theta \colon U^\du \times U^\du \to \C$ given by $b_\theta(\lambda, \mu) = \sum_j \lambda(v_j) \ol{\mu(w_j)}$. 
Here, $U^\du = \Hom(U, \C)$ denotes the dual space of $U$.
By the choice of a basis, $\theta$ resp.~$b_\theta$ can be identified with a Hermitian matrix $A$. 
We can diagonalize $\theta$, i.e.~we can write $\theta = \sum_{j=1}^r \epsilon_j (v_j \otimes \ol{v_j})$, with $r \geq 0$, $\epsilon_j = \pm 1$ and $v_1, \dots, v_r \in U$ linearly independent. 
In this case, the \emph{rank} of $\theta$ is $\rk(\theta) := r$, and $\theta$ is \emph{positive semidefinite} if all $\epsilon_j = 1$, or equivalently, $A \succeq 0$.
The \emph{image} of $\theta$ is $\im(\theta) := \spn(v_1, \dots, v_r)$. 
Note that if $u_1, \dots, u_N$ ($N = \dim_\C(U)$) is a basis of $U$ and $A \in \mathbb{H}_N$ is the Hermitian $N \times N$-matrix associated to $\theta$ with respect to this basis, then \[
\im(\theta) = \spn \left( \sum_{k=1}^N  (A e_j)_k u_k : j = 1, \dots, N \right). 
\]   
\end{lab}

\begin{lab}
Let $A$ be an $\R$-algebra and let $V \subset A_\C$ be a complex subspace of finite dimension. 
By $V \ol{V}$ we denote the subspace of $A_\C$ which is $\C$-linearly generated by the products $p \ol{q}$ ($p, q \in V$).
The multiplication map $\mu \colon V \times \ol{V} \to V \ol{V}$, $(p, \ol{q})\mapsto p \ol{q}$ is $\R$-bilinear and induces an $\R$-linear map $\sfH_2 V \to V \ol{V}$.
Given $f \in A$, the Hermitian tensors $\theta \in \sfH_2 V$ with $\mu(\theta) = f$ are called the \emph{Hermitian Gram tensors} of $f$, relative to $V$.
The \emph{Hermitian Gram spectrahedron} of $f$, relative to $V$, is the set of all positive semidefinite Hermitian Gram tensors of $f$ in $\sfH_2 V$, i.e. \[
\H^\plus(f) := \sfH_2^\plus V \cap \mu^{-1}(f).
\]
\end{lab}

We can adopt the definition of face subspaces from the real symmetric case to obtain a bijection between the nonempty faces of $\H^\plus(f)$ and the face subspaces $U \subset V$ for $\H^\plus(f)$.

\begin{dfn}
We say that a linear subspace $U$ of $V$ is facial, or a face subspace
(for the given spectrahedron $\H^\plus(f)$), if there exists $\theta \in \H^\plus(f)$ with $U = \im(\theta)$.
\end{dfn}

\begin{prop} 
For $U \subset V$ a face subspace for $f$, the face $\scrF(U)$ of $\H^\plus(f)$ has dimension \[
\dim \scrF(U) = \dim_\C(U)^2 -  \dim_\C(U \ol{U}).
\]
\end{prop}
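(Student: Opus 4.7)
The plan is to mirror the proof of the symmetric analogue (Proposition 3.6 in \cite{Sch}): realize $\scrF(U)$ as a full-dimensional slice of the psd cone $\sfH_2^\plus U$ inside the affine subspace $\mu^{-1}(f) \cap \sfH_2 U$, and then compute that affine dimension by analyzing the source and target of $\mu|_{\sfH_2 U}$.

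First, set $N = \dim_\C(U)$ and record that $\dim_\R \sfH_2 U = N^2$: choosing a $\C$-basis $e_1, \dots, e_N$ of $U$, an $\R$-basis of $\sfH_2 U$ consists of the $N$ tensors $e_j \otimes \ol{e_j}$, the $\binom{N}{2}$ tensors $e_j \otimes \ol{e_k} + e_k \otimes \ol{e_j}$ (for $j < k$), and the $\binom{N}{2}$ tensors $i(e_j \otimes \ol{e_k} - e_k \otimes \ol{e_j})$ (for $j < k$), matching the standard count for Hermitian $N \times N$ matrices. Next, identify the image of $\mu|_{\sfH_2 U}$: the Hermitian involution sends $p \otimes \ol{q}$ to $q \otimes \ol{p}$, which $\mu$ maps to $q\ol{p} = \ol{p\ol{q}}$, so $\mu(\sfH_2 U)$ is contained in the real subspace $W := \{g \in U\ol{U} : \ol{g} = g\}$. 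Since complex conjugation interchanges $U$ and $\ol{U}$, the complex space $U\ol{U} \subset A_\C$ is itself closed under conjugation and is therefore the complexification of $W$, which gives $\dim_\R W = \dim_\C (U\ol{U})$.

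Second, I would verify that $\mu|_{\sfH_2 U} \colon \sfH_2 U \to W$ is surjective. Given $g \in W$, write $g = \sum_j p_j \ol{q_j}$ with $p_j, q_j \in U$; then the Hermitianization $\tfrac{1}{2}\sum_j\bigl(p_j \otimes \ol{q_j} + q_j \otimes \ol{p_j}\bigr)$ lies in $\sfH_2 U$ and maps to $\tfrac{1}{2}(g + \ol{g}) = g$. Consequently $\mu^{-1}(f) \cap \sfH_2 U$ is a translate of $\ker(\mu|_{\sfH_2 U})$ and therefore has real dimension $N^2 - \dim_\C (U\ol{U})$. To conclude, I use the face-subspace hypothesis: there exists $\theta_0 \in \scrF(U)$ with $\im(\theta_0) = U$, and such a tensor is an interior point of the cone $\sfH_2^\plus U$ inside $\sfH_2 U$. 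Hence an open neighborhood of $\theta_0$ in $\mu^{-1}(f) \cap \sfH_2 U$ is contained in $\scrF(U)$, so $\scrF(U)$ attains the full dimension of its affine hull, namely $N^2 - \dim_\C (U\ol{U})$.

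The argument is structurally identical to the symmetric case, so I do not anticipate a genuine obstacle; the only real care needed lies in the dimension bookkeeping, since $\sfH_2 U$ has real dimension $N^2$ (rather than $\binom{N+1}{2}$) and the correct codomain of $\mu|_{\sfH_2 U}$ is the real form $W$ of $U\ol{U}$, whose $\R$-dimension happens to coincide with the $\C$-dimension of $U\ol{U}$.
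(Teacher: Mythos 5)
Your proof is correct and takes essentially the same route as the paper: compute $\dim(\mu^{-1}(f)\cap\sfH_2 U)$ by rank--nullity for $\mu|_{\sfH_2 U}$, using $\dim_\R\sfH_2 U = N^2$ and that the image has real dimension $\dim_\C(U\ol{U})$. You are somewhat more careful than the paper's terse proof on one point: you correctly identify the image of $\mu|_{\sfH_2 U}$ as the real form $W=\{g\in U\ol{U}:\ol g=g\}$ and verify surjectivity via Hermitianization, and you spell out why $\scrF(U)$ is full-dimensional in the affine slice (an interior-point argument using the face-subspace hypothesis), both of which the paper leaves implicit.
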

\begin{proof} 
The dimension of the convex set $\scrF(U)$ is the dimension of the (real) affine space $\mu^{-1}(f) \cap \sfH_2 U$. 
Therefore, $\dim \scrF(U) = \dim_\R(W)$ where $W$ is the kernel of the $\R$-linear map $\mu \colon \sfH_2 U \to U \ol{U}$. 
The image of $\mu$ is $\im(\mu) = \spn_\R(p \ol{q} : p, q \in U)$. 
Hence, $\dim_\R (\im(\mu)) = \dim_\C (\im(\mu)_\C) = \dim_\C(U \ol{U})$.
So using that $\dim_\R(\sfH_2 U) = \dim_\C(U)^2$, the claim follows from the rank-nullity theorem.
\end{proof}

\begin{cor}
Let $f = \sum_{j=1}^r p_j \ol{p_j}$ with $p_1, \dots, p_r \in V$ linearly independent, let $\theta = \sum_{j=1}^r p_j \otimes \ol{p_j}$ be the corresponding Hermitian Gram tensor of $f$.
The dimension of the supporting face of $\theta$ in $\H^\plus(f)$ equals the number of independent linear relations between the products $p_j \ol{p_k}$ ($1 \leq j, k \leq r)$. \qed
\end{cor}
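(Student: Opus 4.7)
The plan is to derive this as an essentially immediate consequence of the preceding proposition. The only point to check is the bookkeeping between the dimension formula and the count of linear relations.

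First, I would observe that since $p_1, \dots, p_r$ are linearly independent, the diagonalisation $\theta = \sum_{j=1}^r p_j \otimes \ol{p_j}$ shows that $\rk(\theta) = r$ and $\im(\theta) = U := \spn_\C(p_1, \dots, p_r)$. By definition, the supporting face of $\theta$ in $\H^\plus(f)$ is $\scrF(U)$, and $\dim_\C(U) = r$.

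Next, I would apply the preceding proposition to obtain
\[
\dim \scrF(U) \;=\; \dim_\C(U)^2 - \dim_\C(U\ol{U}) \;=\; r^2 - \dim_\C(U\ol{U}).
\]
By definition, $U\ol{U}$ is the $\C$-linear span of the products $p_j\ol{p_k}$ for $1 \leq j,k \leq r$. There are exactly $r^2$ such products, so if $N$ denotes the number of independent linear relations among them (that is, the dimension of the kernel of the natural surjection $\C^{r^2} \twoheadrightarrow U\ol{U}$ sending the standard basis vector $e_{jk}$ to $p_j\ol{p_k}$), then $\dim_\C(U\ol{U}) = r^2 - N$. Substituting yields $\dim \scrF(U) = N$, which is the claim.

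There is no real obstacle: the statement is purely a reformulation of the dimension formula in terms of the spanning set $\{p_j\ol{p_k}\}$ of $U\ol{U}$. The only mild subtlety is conceptual, namely that one must take the relations to be $\C$-linear relations among the $r^2$ specific generators $p_j \ol{p_k}$ (not just relations modulo the symmetry coming from $\overline{p_j\ol{p_k}} = p_k\ol{p_j}$), which is consistent with the $\C$-linear computation of $\dim_\C(U\ol{U})$ used in the proposition.
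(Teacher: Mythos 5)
Your argument is correct and is exactly the intended (unwritten) proof: since $\im(\theta) = U = \spn_\C(p_0,\dots,p_r)$ has dimension $r$, the preceding proposition gives $\dim\scrF(U) = r^2 - \dim_\C(U\ol U)$, and this difference is by definition the number of independent $\C$-linear relations among the $r^2$ spanning products $p_j\ol{p_k}$ of $U\ol U$. Your closing remark is on target, and one can add that the two possible readings actually agree: the $\C$-kernel of $\C^{r^2}\twoheadrightarrow U\ol U$ is stable under the antilinear involution $(a_{jk})\mapsto(\ol{a_{kj}})$, so its Hermitian fixed part (the ``relations'' corresponding to elements of $\ker(\mu\colon\sfH_2 U\to U\ol U)$ in the proposition's proof) has real dimension equal to the complex dimension of the full kernel.
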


The Hermitian Gram spectrahedron of a positive binary form $f \in \R[x, y]_{2d}$ has dimension $d^2$. 
For the symmetric Gram spectrahedron we have $\dim \Gram(f) = \choose{d}{2}$ and in Corollary \ref{cor:max_dim_of_faces_real} we have shown that a face of $\Gram(f)$ of rank $r$ has dimension at most $\choose{r-1}{2}$.  
We get analogous bounds in the Hermitian case: 

\begin{cor}
Let $f \in \R[x, y]_{2d}$ be a nonnegative binary form.
Let $F \subset \H^\plus(f)$ be a face of rank $r$.
Then $\dim(F) \leq (r-1)^2$.
\end{cor}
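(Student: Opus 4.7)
The plan is to combine the dimension formula from the proposition immediately above with a direct Hermitian analog of \autoref{prop:UU_geq_2r-1}. Let $U = \scrU(F)$, so $\dim_\C(U) = \rk(F) = r$ and the preceding proposition gives $\dim(F) = r^2 - \dim_\C(U \ol{U})$. It therefore suffices to establish the inequality
\[
\dim_\C(U \ol{U}) \geq 2r - 1
\]
for every $r$-dimensional complex subspace $U \subset \C[x, y]_d$, since this immediately yields $\dim(F) \leq r^2 - (2r-1) = (r-1)^2$.

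To prove this inequality I would first pass to the univariate setting, exactly as in \autoref{sec:real_Gram_spectrahedra}. The $\C$-linear isomorphism $\C[x,y]_d \to \C[x]_{\leq d}$, $p(x,y) \mapsto p(x, 1)$, intertwines the Hermitian product $(p, q) \mapsto p \ol{q}$, because conjugation acts coefficientwise and $1 \in \R$. Consequently, $\dim_\C(U \ol{U})$ is preserved under dehomogenization, and we may assume $U \subset \C[x]_{\leq d}$.

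Then I would mimic the proof of \autoref{prop:UU_geq_2r-1}: choose a basis $p_1, \dots, p_r$ of $U$ with strictly increasing degrees $d_1 < d_2 < \cdots < d_r$. Since $\ol{p_j}$ has the same degree as $p_j$ and its leading coefficient is the (nonzero) conjugate of that of $p_j$, the $2r-1$ products
\[
p_1 \ol{p_1},\ p_1 \ol{p_2},\ p_2 \ol{p_2},\ p_2 \ol{p_3},\ \dots,\ p_{r-1} \ol{p_r},\ p_r \ol{p_r}
\]
have pairwise distinct degrees $2d_1 < d_1 + d_2 < 2d_2 < d_2 + d_3 < \cdots < 2d_r$ and are therefore $\C$-linearly independent in $U \ol{U}$.

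I do not anticipate any substantial obstacle: the argument is essentially a transcription of the real symmetric case, and the only point worth explicit verification is that complex conjugation preserves degrees and nonzero leading coefficients, so the degree-based independence argument goes through verbatim. The one structural difference from \autoref{cor:max_dim_of_faces_real} is that $\dim_\R(\sfH_2 U) = r^2$ rather than $\choose{r+1}{2}$, which shifts the final bound from $\choose{r-1}{2}$ to $(r-1)^2$.
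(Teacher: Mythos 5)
Your proof is correct and follows the same route as the paper: reduce to the dimension formula $\dim(F) = r^2 - \dim_\C(U\ol{U})$ and then show $\dim_\C(U\ol{U}) \geq 2r-1$ by adapting the degree argument of Proposition~\ref{prop:UU_geq_2r-1}. The paper simply cites "the same argument as in the real case" at that step; you have spelled out the details (dehomogenization compatibility and degree-preservation under conjugation), which are exactly the points the paper leaves implicit.
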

\begin{proof}
Consider the subspace $U = \scrU(F)$ of $ \C[x,y]_d$. 
The same argument as in the real case shows that $\dim_\C(U \ol{U}) \geq 2r-1$ (cf.~Proposition \ref{prop:UU_geq_2r-1}).
Therefore, 
\[
\dim(F) = r^2 - \dim_\C(U \ol{U}) \leq r^2 - (2r-1) = (r-1)^2.  \qedhere
\]
\end{proof}

\begin{rem} 
Let $\rk(F) = r$. 
Writing $k = (d+1)-r$, i.e.~$k = \dim \ker(\theta)$ for $\theta \in \relint(F)$, we obtain
\[
(d-k)^2 - 2k \leq \dim(F) \leq (d-k)^2. 
\]
\end{rem}

\begin{ex}[cf.~Example \ref{ex:dim_diag_real}]
The following figure illustrates the possible combinations of $\rk(F)$ and $\dim(F)$ for faces of the Hermitian Gram spectrahedron of a positive binary form $f \in \Sigma_{2d}$ for $d = 5$ and $d = 8$.
\vspace{0.5em}

\begin{tikzpicture}[decoration=brace]
	\draw(0,0)--(10,0);
	\foreach \x/\xtext in {0/$0$,0.4/$1$,1.6/$4$,2/$5$,3.6/$9$,5.6/$14$,6.4/$16$,10/$25$}
      \draw(\x,3pt)--(\x,-3pt) node[above=1ex] {\xtext};
	\draw[decorate, yshift=-1.5ex] (10,0) -- node[below=0.4ex] {$r=6$} (10,0);
	\draw[decorate, yshift=-1.5ex] (6.4,0) -- node[below=0.4ex] {$r=5$} (5.6,0);
	\draw[decorate, yshift=-1.5ex] (3.6,0) -- node[below=0.4ex] {$r=4$} (2,0);
    \draw[decorate, yshift=-1.5ex] (1.6,0) -- node[below=0.4ex] {$r=3$} (0,0);
\end{tikzpicture} 

\vspace{0.5em}

\begin{tikzpicture}[decoration=brace]
	\draw(0,0)--(10.67,0);
	\foreach \x/\xtext in {0/$0$,0.67/$4$,1.33/$8 \,$,1.5/$\, 9$,2.67/$16$,3.167/$19$,4.167/$25$,5.33/$32$,6/$36$,7.83/$47 \,$,8.167/$\, 49$,10.67/$64$}
      \draw(\x,3pt)--(\x,-3pt) node[above=1ex] {\xtext} ;
	\draw[decorate, yshift=-1.5ex] (10.67,0) -- node[below=0.4ex] {$r=9$} (10.67,0);  
	\draw[decorate, yshift=-1.5ex] (8.167,0) -- node[below=0.4ex] {$r=8$} (7.83,0);
	\draw[decorate, yshift=-1.5ex] (6,0) -- node[below=0.4ex] {$r=7$} (5.33,0);
    \draw[decorate, yshift=-1.5ex] (4.167,0) -- node[below=0.4ex] {$r=6$} (3.167,0);
    \draw[decorate, yshift=-1.5ex] (2.67,0) -- node[below=0.4ex] {$r=5$} (1.33,0);
    \draw[decorate, yshift=-5.5ex] (1.5,0) -- node[below=0.4ex] {$r=4$} (0,0);
    \draw[decorate, yshift=-1.5ex] (0.67,0) -- node[below=0.4ex] {$r=3$} (0,0);
\end{tikzpicture}
\end{ex}

We show how the concepts of Hermitian Gram tensors and facial subspaces can be used to give straightforward proofs of some facts presented in \cite{CPSV}. 
Subsequently, we will see that for generic $f \in \R[x, y]_{2d}$ the Hermitian Gram spectrahedron of $f$ contains a face of rank $d$ and dimension $(d-1)^2$.
In contrast to that, the symmetric Gram spectrahedron does \emph{not} contain a face of rank $d$ and dimension $\choose{d-1}{2}$, as we have shown in \ref{cor:no_face_of_max_dim}.

\begin{prop}[cf.~\cite{CPSV}, Prop.~5.6]
\label{ext:CPSV_factor_isomorphic_to_face}
If $f \in \R[\x]_{2P}$ factors as $f = g \ol{g} \cdot h$, where $g \in \C[\x]$ and $h \in \R[\x]$, then $\H^\plus(h)$ is linearly isomorphic to a face of $\H^\plus(f)$.
\end{prop}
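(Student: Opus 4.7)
The plan is to construct an explicit $\R$-linear embedding $\Phi \colon \H^\plus(h) \hookrightarrow \H^\plus(f)$ induced by multiplication with $g$, and then to identify its image with a face of $\H^\plus(f)$. Writing $h \in \R[\x]_{2D}$ and $g \in \C[\x]_e$, so that $P = D+e$, set $V := \C[\x]_D$ and $V' := \C[\x]_P$. Multiplication by $g$ yields a $\C$-linear injection $\phi \colon V \hookrightarrow V'$, $p \mapsto gp$, and this extends to an $\R$-linear injection $\Phi \colon \sfH_2 V \hookrightarrow \sfH_2 V'$ via $p \otimes \ol{q} \mapsto (gp) \otimes \ol{gq}$.

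First I would verify that $\Phi$ maps $\H^\plus(h)$ into $\H^\plus(f)$: for a diagonalized $\theta = \sum_j p_j \otimes \ol{p_j} \in \H^\plus(h)$, the tensor $\Phi(\theta) = \sum_j (gp_j) \otimes \ol{gp_j}$ is manifestly positive semidefinite, and $\mu(\Phi(\theta)) = g\ol{g} \cdot \mu(\theta) = g\ol{g}\, h = f$.

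The crux is to identify the image. Let $U' := \phi(V) = gV \subset V'$. I claim that
\[
\Phi(\H^\plus(h)) \;=\; F := \bigl\{\theta' \in \H^\plus(f) : \im(\theta') \subset U'\bigr\}.
\]
The inclusion ``$\subset$'' is immediate. For the reverse inclusion, take $\theta' \in F$ and diagonalize $\theta' = \sum_j q_j \otimes \ol{q_j}$ with the $q_j$ a basis of $\im(\theta') \subset U'$. Since $\phi$ is injective, each $q_j$ is uniquely of the form $g p_j$ with $p_j \in V$, and the $p_j$ remain linearly independent. Setting $\theta := \sum_j p_j \otimes \ol{p_j}$ gives $\Phi(\theta) = \theta'$; moreover $g\ol{g}\mu(\theta) = \mu(\theta') = f = g\ol{g}h$, and since $\C[\x]$ is an integral domain we can cancel $g\ol{g}$ to conclude $\mu(\theta) = h$, so $\theta \in \H^\plus(h)$.

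Finally, $F$ is a face of $\H^\plus(f)$ because it is the intersection of $\H^\plus(f)$ with the face $\{A \in \sfH_2^\plus V' : \im(A) \subset U'\}$ of the ambient psd cone $\sfH_2^\plus V'$. Combined with the $\R$-linearity and injectivity of $\Phi$, this yields the asserted linear isomorphism $\H^\plus(h) \to F$. The argument is essentially formal; the only step that warrants a moment of care is the cancellation of $g\ol{g}$, which uses only that $g \neq 0$ and that $\C[\x]$ is an integral domain.
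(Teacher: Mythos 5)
Your proof is correct and uses the same underlying device as the paper's, namely multiplication by $g$. The difference is one of explicitness: the paper picks a relative-interior point $\theta = \sum_j p_j \otimes \ol{p_j}$ of $\H^\plus(h)$, notes that $U := g \cdot \im(\theta)$ is a face subspace for $\H^\plus(f)$ because $\sum_j gp_j \otimes \ol{gp_j}$ is a Gram tensor of $f$ with that image, and then simply asserts that $\scrF(U)$ is linearly isomorphic to $\H^\plus(h)$. You instead construct the map $\Phi$ explicitly and verify that it is a bijection from $\H^\plus(h)$ onto the face $\{\theta' \in \H^\plus(f) : \im(\theta') \subset gV\}$, which is precisely what the paper's terse final sentence elides; you also work with the larger subspace $gV$ rather than $g\cdot\scrU(\H^\plus(h))$, but these cut out the same face, so this is harmless. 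The one step you take for granted is the polytope decomposition $P = D + e$ (equivalently, that $gV \subset V'$); this is exactly the Newton-polytope reduction (``WLOG $2P = \Newt(f) = 2\Newt(g) + \Newt(h)$'') with which the paper opens its proof, and in the general multivariate setting it deserves a line of justification rather than an assumption.
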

Here, $P \subset \R^n$ is a polytope with vertices in $\Z_{\geq 0}^n$ and $\R[\x]_{2P}$ denotes the vector space of all polynomials $f \in \R[\x] = \R[x_1, \dots, x_n]$ whose Newton polytope $\Newt(f)$ is contained in $2P$.
\begin{proof}
At first, we follow the argumentation of \cite{CPSV}: 
Without loss of generality, we can assume that $2P$ equals the Newton polytope of f. 
Then $2P$ is the Minkowski sum of the polytopes $2 \Newt(g)$ and $\Newt(h)$. 
Therefore, we can write $\Newt(h)$ as $2Q$ for some $Q \subset \R^n$ with integer vertices. 
We see that $P$ is the Minkowski sum $\Newt(g) + Q$. \\  
For the rest of the proof, we will argue using Hermitian Gram tensors instead of matrices. 
We choose some $\theta \in \relint(\H^\plus(h))$, for instance $\theta = \sum_{j=1}^r p_j \otimes \ol{p_j}$, with $p_1, \dots, p_r \in \C[\x]_Q$ linearly independent and $\mu(\theta) = h$.
Let $U' = \im(\theta) = \spn(p_1, \dots, p_r) \subset \C[\x]_Q$ and consider $U := gU' \subset \C[\x]_P$.
Then $(g p_1, \dots, g p_r)$ is a basis of $g U' = U$ and 
\[
\mu \left( \sum_{j=1}^r g p_j \otimes \ol{g p_j} \right) = g\ol{g} \cdot \sum_{j=1}^r p_j \ol{p_j} = g \ol{g} \cdot h = f.
\]
Therefore, $U \subset \C[\x]_P$ is a facial subspace for $\H^\plus(f)$ and the face $\scrF(U)$ of $\H^\plus(f)$ is linearly isomorphic to $\H^\plus(h)$.
\end{proof}

\begin{cor}[cf.~\cite{CPSV}, Cor.~5.7]
Let $f \in \R[x, y]_{2d}$ be a positive binary form with distinct roots. 
Then $\H^\plus(f)$ contains $2^d$ tensors of rank one. 
The sum of rank-one tensors $p_1 \otimes \ol{p_1}, \dots, p_s \otimes \ol{p_s}$ in $\H^\plus(f)$ satisfies \[
\rk \left( \sum_{k=1}^s p_k \otimes \ol{p_k} \right) \leq d+1 - \deg( \gcd(p_1, \dots, p_s)).
\]
For each $2 \leq s \leq 2^d$, there are $s$ rank-one tensors in $\H^\plus(f)$ whose sum has rank at most $\lceil \log_2(s) \rceil + 1$. 
\end{cor}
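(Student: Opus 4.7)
The plan is to address the three assertions separately, each building on the previous one.

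For the first claim, I would use that $f$ is strictly positive on $\P^1(\R)$ with distinct roots, so its $2d$ roots in $\P^1(\C)$ split into $d$ complex-conjugate pairs $\{\alpha_j, \ol{\alpha_j}\}_{j=1}^d$ (none real). Writing $f = c \prod_{j=1}^d (x-\alpha_j y)(x-\ol{\alpha_j} y)$ with $c > 0$, a rank-one Hermitian tensor in $\H^\plus(f)$ is exactly $p \otimes \ol{p}$ for some $p \in \C[x,y]_d$ with $p\ol{p} = f$, and two such $p$'s give the same tensor iff they differ by a unit-modulus scalar. Unique factorization in $\C[x,y]$ then forces $p$ to pick, for each conjugate pair, exactly one of the two linear factors; the remaining scalar is determined up to phase by $|\lambda|^2 = c$, so there are exactly $2^d$ rank-one tensors.

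For the second claim, I would start from the diagonalized description of the image: if $\theta = \sum_{k=1}^s p_k \otimes \ol{p_k}$, then $\im(\theta) = \spn_\C(p_1,\dots,p_s)$ (pick a maximal linearly independent subfamily of the $p_k$; $\theta$ is then expressed as a sum of rank-one tensors on linearly independent vectors, and its span is unchanged). Setting $g := \gcd(p_1,\dots,p_s)$ with $e := \deg g$, each $p_k$ lies in the subspace $g \cdot \C[x,y]_{d-e}$ of $\C$-dimension $d+1-e$. Consequently $\rk(\theta) = \dim_\C \im(\theta) \leq d+1-e$.

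For the third claim, I would exploit the bijection from the first claim between the $2^d$ rank-one tensors in $\H^\plus(f)$ and the subsets $S \subset \{1,\dots,d\}$: the subset $S$ corresponds to
\[
p_S := \sqrt{c}\,\prod_{j \in S}(x-\alpha_j y)\cdot \prod_{j \notin S}(x-\ol{\alpha_j} y)
\]
(with some fixed choice of phase). Set $m := \lceil \log_2 s \rceil$, so that $s \leq 2^m$. Pick any $s$ distinct subsets $T_1,\dots,T_s$ of $\{1,\dots,m\}$ and let $S_k := T_k \cup \{m+1,\dots,d\}$. The corresponding polynomials $p_{S_1},\dots,p_{S_s}$ all share the common factor $\prod_{j=m+1}^{d}(x-\alpha_j y)$ of degree $d-m$, so $\deg\gcd(p_{S_1},\dots,p_{S_s}) \geq d-m$. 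Applying the second claim gives
\[
\rk\Bigl(\sum_{k=1}^s p_{S_k} \otimes \ol{p_{S_k}}\Bigr) \leq d+1 - (d-m) = \lceil \log_2 s \rceil + 1.
\]

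The main mild obstacle is the bookkeeping in the first claim: one must keep careful track of the positive leading coefficient $c$ and the unit-modulus equivalence to ensure the count is exactly $2^d$ and not off by a factor of two. Beyond that, the argument is a clean application of unique factorization in $\C[x,y]$ together with the gcd inequality from part two, plus a purely combinatorial packing argument for part three.
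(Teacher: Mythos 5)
Your proposal is correct and follows essentially the same route as the paper: the $2^d$ count via unique factorization into conjugate pairs, the rank bound via factoring out $g = \gcd$ so that all $p_k$ lie in $g\cdot\C[x,y]_{d-e}$, and the third claim by fixing a common block of $d - \lceil\log_2 s\rceil$ linear factors and varying the rest. The only cosmetic differences are the indexing of the shared factors and your (harmless) observation that $\deg\gcd \geq d-m$ already suffices, where the paper identifies the gcd exactly.
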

\begin{proof}
Let $g = \gcd(p_1, \dots, p_r)$ and $\deg(g) = e$, for instance $p_k = g q_k$ for some $q_k \in \C[x, y]_{d-e}$ ($k = 1, \dots, s$). 
Then 
\begin{IEEEeqnarray*}{rCl}
\rk \left( \sum_{k=1}^s p_k \otimes \ol{p_k} \right) 
& = & \dim \spn(p_1, \dots, p_s) = \dim \spn(q_1, \dots, q_s) \\
& \leq & \dim \C[x, y]_{d-e} = (d-e)+1 = d+1 - \deg(g).
\end{IEEEeqnarray*}
If $f(x, 1)$ is monic with roots $a_1, \dots, a_d, \ol{a_1}, \dots, \ol{a_d} \in \C$, then the $2^d$ tensors of rank one in $\H^\plus(f)$ are exactly the tensors $p \otimes \ol{p}$ with $p = \prod_{j=1}^d (x - b_j y) \in \C[x, y]_d$, where $b_j \in \{ a_j, \ol{a_j} \}$. \\
For the last part of the claim let $2 \leq s \leq 2^d$ and $e \in \N_0$ such that $2^{d-e-1} < s \leq 2^{d-e}$. 
Consider $g = \prod_{j=1}^e (x - a_j y)$, and choose $s$ pairwise different elements $q_1, \dots, q_s$ of the set \[
\left\{ \prod_{j=e+1}^d (x - b_j y) : b_j \in \{ a_j, \ol{a_j} \}   \right\}
\]
(which has cardinality $2^{d-e}$). 
Let $p_j = g q_j$. 
Then $p_1 \otimes \ol{p_1}, \dots, p_s \otimes \ol{p_s}$ are rank-one Hermitian Gram tensors of $f$, and $g$ is the greatest common divisor of $p_1, \dots, p_s$. 
Hence, \[
\rk \left( \sum_{k=1}^s p_k \otimes \ol{p_k} \right) \leq d+1 - \deg(g) = (d-e)+1 = \lceil \log_2(s) \rceil + 1. \qedhere
\] 
\end{proof}

\begin{cor} 
Let $f \in \R[x, y]_{2d}$ be a positive binary form. 
For each $1 \leq r \leq d+1$ there is a face $F \subset \H^\plus(f)$ with $(\rk(F), \dim(F)) = (r, (r-1)^2)$.
\end{cor}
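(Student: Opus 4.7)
The plan is to realize $F$ as the face of $\H^\plus(f)$ produced by Proposition~\ref{ext:CPSV_factor_isomorphic_to_face} from a suitable factorization $f = g\ol{g}\cdot h$ in which $h$ is a positive binary form of degree $2(r-1)$. Since $f$ is positive on $\P^1(\R)$, it has no real roots and splits over $\C$ as
\[
f(x, y) = c \prod_{j=1}^{d} (x - \alpha_j y)(x - \ol{\alpha_j} y)
\]
with $c > 0$ and $\alpha_1, \dots, \alpha_d \in \C \setminus \R$. Given $r \in \{1, \dots, d+1\}$, I would set $e := d - r + 1$ and define
\[
g := \prod_{j=1}^{e} (x - \alpha_j y) \in \C[x, y]_e, \qquad h := c \prod_{j=e+1}^{d} (x - \alpha_j y)(x - \ol{\alpha_j} y) \in \R[x, y]_{2(r-1)}.
\]
Then $f = g\ol{g}\cdot h$, and $h$ is strictly positive on $\P^1(\R)$ because each remaining factor equals $|x - \alpha_j y|^2$ and $c > 0$.

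Proposition~\ref{ext:CPSV_factor_isomorphic_to_face} then produces a face $F = \scrF(U) \subset \H^\plus(f)$, with $U := g \cdot \C[x, y]_{r-1}$, that is linearly isomorphic to $\H^\plus(h)$. Since $h$ is strictly positive of degree $2(r-1)$, the spectrahedron $\H^\plus(h)$ attains its maximal dimension $(r-1)^2$ (as recorded earlier for positive binary forms of degree $2(r-1)$), whence $\dim(F) = (r-1)^2$. Moreover, multiplication by $g$ is a $\C$-linear isomorphism $\C[x, y]_{r-1} \to U$, so $\rk(F) = \dim_\C(U) = r$, as required. The extreme cases $r = 1$ (with $g$ of degree $d$ and $F$ a single rank-one tensor) and $r = d+1$ (with $g = 1$ and $F = \H^\plus(f)$) are handled uniformly by this construction.

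There is essentially no obstacle: once the factorization is in hand, the conclusion follows immediately from the dimension formula for $\H^\plus(h)$ and from Proposition~\ref{ext:CPSV_factor_isomorphic_to_face}. The only point worth flagging is that $h$ must be genuinely positive, not merely nonnegative, for $\H^\plus(h)$ to attain its maximal dimension $(r-1)^2$; this is guaranteed by distributing each conjugate pair of roots of $f$ entirely into either $g\ol{g}$ or $h$.
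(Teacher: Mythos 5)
Your proof is correct and follows essentially the same route as the paper's: both arguments factor $f = g\ol{g}\cdot h$ with $h$ a positive binary form of degree $2(r-1)$, invoke Proposition~\ref{ext:CPSV_factor_isomorphic_to_face} to obtain a face linearly isomorphic to $\H^\plus(h)$, and then read off rank and dimension from the fact that $h$ lies in the interior of $\Sigma_{2(r-1)}$. You merely make the factorization and the identification $U = g\,\C[x,y]_{r-1}$ more explicit than the paper does.
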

\begin{proof}
Fix $r \in \{1, \dots, d+1\}$.
Since a binary form factors completely into Hermitian squares, we can write $f = g \ol{g} \cdot h$, where $h \in \R[x, y]_{2(r-1)}$ is again a positive binary form and $g \in \C[x, y]_{d-(r-1)}$. 
By Proposition \ref{ext:CPSV_factor_isomorphic_to_face}, $\H^\plus(h)$ is linearly isomorphic to a face $F$ of $\H^\plus(f)$. 
Because $h$ is in the interior of $\Sigma_{2(r-1)}$, any tensor in the relative interior of $\H^\plus(h)$ has rank $r$, and $\dim(\H^\plus(h)) = (r-1)^2$.
\end{proof}


\section{Polyhedral faces of Hermitian Gram spectrahedra}
Laurent and Poljak (see \cite{LP}) analyze the facial structure of elliptopes. 
They are also interested in the polyhedral faces of those spectrahedra. 
Some techniques presented in the proof of Theorem 4.1 in \cite{LP} turn out to be helpful for understanding polyhedral faces in Hermitian Gram spectrahedra:

\begin{thm} \label{thm:dim_bound_herm}
Let $f \in \R[x, y]_{2d}$ be a nonnegative binary form of degree $2d$.
Let $F$ be a polyhedral face of $\H^\plus(f)$ of dimension $k$. 
Then $\choose{k+1}{2} \leq d$.
Moreover, if all vertices of $F$ are rank-one tensors, then $F$ is a simplex. 
\end{thm}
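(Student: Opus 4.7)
The plan is to reduce both claims of the theorem to the following key lemma, which I regard as the proper ``polyhedral'' input going beyond the general inequalities already established: every nonempty polyhedral face $F$ of $\H^\plus(f)$ satisfies $\rk(F) \geq \dim(F)+1$.

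I would prove this lemma by induction on $k = \dim(F)$. The base case $k = 0$ is immediate since $F$ is then a single tensor of rank at least one. For $k \geq 1$, a polytope of positive dimension has at least one facet $G \subsetneq F$; because faces of faces are faces, $G$ is itself a polyhedral face of $\H^\plus(f)$ of dimension $k-1$, so by induction $\rk(G) \geq k$. The inclusion-preserving bijection between faces and face subspaces turns the strict inclusion $G \subsetneq F$ into $\scrU(G) \subsetneq \scrU(F)$, whence $\rk(G) < \rk(F)$ and therefore $\rk(F) \geq k+1$.

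Given the lemma, the bound $\choose{k+1}{2} \leq d$ drops out of the dimension formula for $\scrF(U)$. Writing $r = \rk(F)$ and $U = \scrU(F)$, we have $k = r^2 - \dim_\C(U\ol{U})$, so $\dim_\C(U\ol{U}) = r^2 - k \geq (k+1)^2 - k = k^2+k+1$. Combined with $\dim_\C(U\ol{U}) \leq \dim_\C \C[x,y]_{2d} = 2d+1$, this yields $k(k+1) \leq 2d$, as required.

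For the ``moreover'' statement, write the rank-one vertices as $\theta_i = p_i \otimes \ol{p_i}$ with $p_i \ol{p_i} = f$ for $1 \leq i \leq n$, so that $U = \spn(p_1, \ldots, p_n)$. I would first argue that $r := \dim_\C U$ equals $k+1$: the lemma gives $r \geq k+1$, and if $r > k+1$ I pick $k+1$ linearly independent $p_i$'s, say $p_1, \ldots, p_{k+1}$, and form $U' := \spn(p_1, \ldots, p_{k+1}) \subsetneq U$. The tensor $\frac{1}{k+1}\sum_{i=1}^{k+1} p_i \otimes \ol{p_i}$ witnesses that $U'$ is a face subspace, and the bijection then yields $\scrF(U') \subsetneq F$, so $\dim \scrF(U') < k$ and hence $\dim_\C(U'\ol{U'}) > k^2+k+1$; but the $k$ relations $p_i \ol{p_i} = p_1 \ol{p_1}$ ($i \geq 2$) among the $(k+1)^2$ products $p_i \ol{p_j}$ enforce $\dim_\C(U'\ol{U'}) \leq k^2+k+1$, a contradiction. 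Hence $r = k+1$ and $(p_1, \ldots, p_{k+1})$ is a basis of $U$; the tensors $\theta_1, \ldots, \theta_{k+1}$ are linearly independent in $\sfH_2 U$ and therefore span a $k$-simplex with the same affine hull as $F$. Any further vertex $\theta_j = p_j \otimes \ol{p_j}$ with $j > k+1$ must lie in $\mathrm{aff}(F)$; writing $p_j = \sum_i c_i p_i$ and matching the coefficient of $p_i \otimes \ol{p_l}$ for $i \neq l$ in the resulting affine relation $\theta_j = \sum_{i=1}^{k+1} \lambda_i \theta_i$ gives $c_i \ol{c_l} = 0$, so at most one $c_i$ is nonzero; the normalization $p_j \ol{p_j} = f$ then forces $\theta_j = \theta_{i_0}$, contradicting distinctness of the vertices. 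I expect the subtle step to be this final matching of coefficients in $U \otimes \ol U$; everything else flows cleanly from the induction lemma $\rk(F) \geq \dim(F)+1$, which I see as the main conceptual point of the whole argument.
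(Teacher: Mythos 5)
Your proof is correct. The first part coincides with the paper's argument: your induction lemma $\rk(F) \geq \dim(F)+1$ is the chain of faces $F_0 \subset \cdots \subset F_k$ from the paper's proof reformulated inductively, and the dimension count $\dim_\C(U\ol{U}) \leq 2d+1$ then gives $\choose{k+1}{2} \leq d$ in the same way. The ``moreover'' part, however, is a genuinely different argument. The paper argues in the style of Laurent--Poljak: after choosing $k+1$ linearly independent vertex vectors $v_0,\dots,v_k$, any further vertex $X = \sum \alpha_j v_j v_j^\ast$ lying in $\mathrm{aff}(F)$ is tested against a vector $u$ dual to all but one of the $v_j$, and positive semidefiniteness of $X$ forces $\alpha_l \geq 0$ for each $l$, so $X$ is in the convex hull and cannot be a new extreme point. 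You instead first pin down $\rk(F) = k+1$ exactly by a dimension count on the face subspace $U'$, and then match coefficients in the basis $\{p_i \otimes \ol{p_l}\}$ of $U \otimes_\C \ol U$ to conclude $c_i \ol{c_l} = 0$ for $i \neq l$, so any rank-one vertex in $\mathrm{aff}(F)$ is one of the $\theta_i$. Both routes are valid; the paper's avoids establishing $r = k+1$ in advance and is a little shorter, while yours is more explicitly algebraic. Incidentally your separate argument for $r = k+1$ is redundant: once you know $\theta_j \in \mathrm{aff}(\theta_1,\dots,\theta_{k+1}) \subset \sfH_2 U'$, the image condition forces $p_j \in U' = \spn(p_1,\dots,p_{k+1})$ automatically, so the coefficient matching already absorbs all vertex vectors into $U'$, and $r = k+1$ drops out afterwards.
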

\begin{proof}
Let $F_0 \subset F_1 \subset \dots \subset F_k := F$ be a chain of faces of $F$, where $\dim(F_j) = j$ for all $j$. 
We must have $\rk(F_j) \geq j+1$, so $r := \rk(F) \geq k+1$.
Therefore, \[
k = \dim(F) \geq r^2 - (2d+1) \geq (k+1)^2 - (2d+1),
\] 
which is equivalent to $\choose{k+1}{2} \leq d$. \\
Suppose now that all vertices of $F$ are rank-one tensors resp.~rank-one matrices, for instance $\Ex(F) = \{ v_j v_j^\ast : j \in J \}$.
Then $\scrU(F) = \spn(v_j : j \in J)$ and $\dim \scrU(F) = \rk(F) \geq k+1$.
Choose $k+1$ linearly independent vectors $v_0, \dots, v_k$ from $\{ v_j : j \in J \}$.
Then the vertices $v_j v_j^\ast$ ($j = 0, 1, \dots, k$) affinely span the polyhedron $F$. 
We show that those are the only vertices of $F$. 
Assume $X$ is another vertex of $F$. 
Then $X = \sum_{j=0}^k \alpha_j v_j v_j^\ast$ with $\sum_{j=0}^k \alpha_j = 1$. 
Let $l \in \{0, 1, \dots, k\}$ and let \[
0 \neq u \in \spn(v_j : j = 0, 1, \dots, k, j \neq l)^\perp \cap \spn(v_0, v_1, \dots, v_k).
\]
Since $X$ is psd, we obtain \[
0 \leq u^\ast X u = \sum_{j=0}^k \alpha_j u^\ast v_j v_j^\ast u = \alpha_l u^\ast v_l v_l^\ast u = \alpha_l |u^\ast v_l|^2.
\]
But $u^\ast v_l \neq 0$ and therefore $\alpha_l \geq 0$. 
This means that $X$ is contained in the convex hull of $v_0 v_0^\ast, \dots, v_k v_k^\ast$, a contradiction.
We conclude that $F$ is a simplex.
\end{proof}

\begin{cor}
If $F \subset \H^\plus(f)$ is a polyhedral face of dimension $k$ and all vertices of $F$ are rank-one tensors, then $F$ is a simplex with vertices $\theta_j = p_j \otimes \ol{p_j}$ ($j = 0, \dots, k$) and the linear relations between the products $p_j \ol{p_l}$ are generated by the $k$ obvious relations $p_0 \ol{p_0} = p_j \ol{p_j}$ for $j = 1, \dots, k$. \qed
\end{cor}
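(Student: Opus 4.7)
The simplex structure is immediate from Theorem \ref{thm:dim_bound_herm}, so the content is the count of independent relations. The plan is to combine the dimension formula $\dim \scrF(U) = \dim_\C(U)^2 - \dim_\C(U\ol{U})$ with the obvious relations coming from $\mu(\theta_j) = f$.

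First, by Theorem \ref{thm:dim_bound_herm}, $F$ is a simplex with $k+1$ rank-one vertices $\theta_j = p_j \otimes \ol{p_j}$ for $j = 0, \dots, k$. As in the proof of that theorem, these are chosen so that $p_0, \dots, p_k$ are linearly independent in $V = \C[x,y]_d$; this forces $\scrU(F) = \spn_\C(p_0, \dots, p_k)$ and hence $\rk(F) = k+1$. Set $U := \scrU(F)$. Since each $\theta_j$ is a Hermitian Gram tensor of $f$, we have the $k$ \emph{obvious} relations
\[
p_0 \ol{p_0} - p_j \ol{p_j} = 0, \qquad j = 1, \dots, k,
\]
in the space $U \ol{U}$, and these are manifestly linearly independent (the $k+1$ diagonal terms $p_j \ol{p_j}$ are distinct basis vectors in the free $\C$-vector space on the index set $\{(j,l) : 0 \leq j, l \leq k\}$).

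Next, I would invoke the dimension formula for the face $F = \scrF(U)$: since $\dim F = k$ and $\dim_\C(U) = k+1$, it gives
\[
\dim_\C(U \ol{U}) = (k+1)^2 - k.
\]
Consider the surjective $\C$-linear multiplication map $m \colon U \otimes_\C \ol{U} \to U \ol{U}$. Its domain has dimension $(k+1)^2$, and the products $p_j \ol{p_l}$ ($0 \leq j, l \leq k$) span $U\ol{U}$; the kernel of $m$ is precisely the space of $\C$-linear relations between these $(k+1)^2$ products, and by rank-nullity it has dimension exactly $k$.

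Finally, the $k$ obvious relations listed above lie in $\ker(m)$ and are $\C$-linearly independent; since $\ker(m)$ has dimension $k$, they form a basis of $\ker(m)$, which is the desired conclusion. No real obstacle arises once the dimension formula is applied; the only minor care needed is to note that any complex relation decomposes into its real and imaginary parts, but since both the obvious relations and the space of relations are inherently $\C$-linear here (we work in $U\ol{U} \subset \C[x,y]_{2d}$), counting $\C$-dimensions suffices.
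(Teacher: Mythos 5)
Your proof is correct and supplies exactly the argument the paper is implicitly invoking (the paper marks the corollary with \qed, treating it as an immediate consequence of Theorem \ref{thm:dim_bound_herm} and the dimension formula). You correctly get $\rk(F)=k+1$ and the simplex structure from the theorem, then read off $\dim_\C(U\ol{U})=(k+1)^2-k$ from the dimension formula and apply rank–nullity to the multiplication map to see that the $k$ manifestly independent relations $p_0\ol{p_0}-p_j\ol{p_j}$ already exhaust the kernel.
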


Let $d \in \N$. 
We aim to construct a positive binary form $f \in \R[x, y]_{2d}$ with distinct roots such that the following holds: 
For all $k \in \N$ with $\choose{k+1}{2} \leq d$, there is a polyhedral face of dimension $k$ in the Hermitian Gram spectrahedron of $f$. 
We will need some preliminary work. 
[Note that \ref{prop:only_rel_among_herm_squares_means_polyhedral} - \ref{cor:max_rk_min_dim_sum_polyhedral} do not require binary forms.]

\begin{prop} \label{prop:only_rel_among_herm_squares_means_polyhedral}
Let $F$ be a face of $\H^\plus(f)$, $\rk(F) = r$. 
If there is a basis $p_1, \dots, p_r$ of $U = \scrU(F)$ such that $f = p_1 \ol{p_1} + \dots + p_r \ol{p_r}$ and the linear relations among the products $p_j \ol{p_k}$ ($1 \leq j, k \leq r$) only involve the Hermitian squares $p_1 \ol{p_1} \dots, p_r \ol{p_r}$, then $F$ is polyhedral. 
\end{prop}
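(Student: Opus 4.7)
\medskip

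\noindent\textbf{Proof plan.} My plan is to show that every element of $F$ is forced to be diagonal with respect to the basis $p_1, \dots, p_r$, and that the polyhedral structure of $F$ then becomes a matter of recognizing an intersection with a positive orthant.

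First I would identify $F$ with the fiber $\mu^{-1}(f) \cap \sfH_2^\plus U$ inside $\sfH_2 U$. Fixing the basis $p_1, \dots, p_r$, every $\theta \in \sfH_2 U$ can be written uniquely as $\theta = \sum_{j,k} A_{jk}\, p_j \otimes \ol{p_k}$ with a Hermitian matrix $A = (A_{jk}) \in \mathbb{H}_r$, and under this identification $\mu(\theta) = \sum_{j,k} A_{jk}\, p_j \ol{p_k}$.

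Next I would compute the kernel of $\mu|_{\sfH_2 U}$ using the hypothesis. If $\mu(\theta) = 0$, then $\sum_{j,k} A_{jk}\, p_j \ol{p_k} = 0$ is a $\C$-linear relation among the products $p_j \ol{p_k}$. By assumption every such relation only involves the Hermitian squares, forcing $A_{jk} = 0$ whenever $j \neq k$. Hence $\ker(\mu|_{\sfH_2 U})$ consists exclusively of tensors that are diagonal in the basis $p_1, \dots, p_r$. Since $\theta_0 := \sum_{j=1}^r p_j \otimes \ol{p_j}$ lies in $F$ and any other $\theta \in F$ differs from $\theta_0$ by an element of $\ker(\mu|_{\sfH_2 U})$, every $\theta \in F$ is itself diagonal: $\theta = \sum_{j=1}^r b_j\, p_j \otimes \ol{p_j}$ for unique real numbers $b_1, \dots, b_r$.

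Finally I would read off the polyhedral description. The affine-linear map
\[
\R^r \longrightarrow \sfH_2 U, \qquad (b_1, \dots, b_r) \longmapsto \sum_{j=1}^r b_j\, p_j \otimes \ol{p_j}
\]
is injective because $p_1, \dots, p_r$ are linearly independent; and a diagonal Hermitian tensor in this basis is positive semidefinite if and only if every $b_j \geq 0$. Under the above map, $F$ therefore corresponds bijectively to
\[
\Bigl\{\, b \in \R_{\geq 0}^r \,:\, \sum_{j=1}^r b_j\, p_j \ol{p_j} = f \,\Bigr\},
\]
which is the intersection of an affine subspace of $\R^r$ with the nonnegative orthant, and hence a polyhedron.

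The only real obstacle is the second step: one must notice that the hypothesis, phrased as a statement about linear relations in $V\ol{V}$, translates into the off-diagonal matrix entries of $A$ being uniquely determined (indeed forced to match those of $\theta_0$) by the requirement $\mu(\theta) = f$. Once this rigidity of the off-diagonal part is established, the polyhedral description of $F$ is immediate from the parametrization by the diagonal coefficients $b_j$.
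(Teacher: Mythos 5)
Your proof is correct and follows essentially the same route as the paper: fix the basis $p_1,\dots,p_r$, observe that any Gram tensor of $f$ in $\sfH_2 U$ differs from $\theta_0=\sum_j p_j\otimes\ol{p_j}$ by a Hermitian tensor in $\ker(\mu|_{\sfH_2 U})$, use the hypothesis to force that difference to be diagonal, and then read off the polyhedral structure from the diagonal linear matrix inequality. Your final step, phrasing this as an affine slice of the nonnegative orthant, is just a slightly more explicit form of the paper's closing remark that a diagonal LMI describes a polyhedron.
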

\begin{proof}
We write $\mathfrak{p} = (p_1, \dots, p_r)^T$.
With respect to the basis $p_1, \dots, p_r$ of $U$, the Gram matrices of $f$ relative to $U$ are of the form $I_r + A$, where $A$ is a Hermitian $r \times r$-matrix with $\mathfrak{p}^T A \ol{\mathfrak{p}} = 0$.
For $j \neq k$ there is no linear relation among the generators of $U \ol{U}$ which involves $p_j \ol{p_k}$.
Therefore, any such $A$ is diagonal.
Thus, for this basis of $U$, the elements of $F$ correspond to the solutions of a diagonal linear matrix inequality, so $F$ is polyhedral. 
\end{proof}

\begin{rem}
In the situation of the preceding proposition, any $\theta \in F$ has a representation $\theta = \sum_{j=1}^r a_j^2 (p_j \otimes \ol{p_j})$ with $a_j \in \R$. 
Indeed, if $D$ is the (diagonal) Hermitian Gram matrix associated to $\theta$ with respect to the basis $p_1, \dots, p_r$ of $U$, then we can choose $a_j$ to be a square root of $D_{jj} \in \R_{\geq 0}$ ($j = 1, \dots, r$).
\end{rem}

\begin{cor} \label{cor:max_rk_min_dim_sum_polyhedral}
Let $\theta_0, \dots, \theta_k \in \H^\plus(f)$.
Let $F := \suppface(\theta_i : i = 0, \dots, k) = \suppface \left( \frac{1}{k+1} (\theta_0 + \dots + \theta_k) \right)$.
If $\dim(F) = k$ and $\rk(F) = \sum_i \rk(\theta_i)$, then $F$ is polyhedral.
\end{cor}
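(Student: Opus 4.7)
The plan is to exhibit a basis of $U := \scrU(F)$ with respect to which every Hermitian Gram tensor of $f$ in $\sfH_2 U$ is represented by a diagonal matrix, and then invoke Proposition \ref{prop:only_rel_among_herm_squares_means_polyhedral}. Set $U_i := \im(\theta_i)$ and $r_i := \rk(\theta_i)$. Since $\scrU(F) = \im\bigl(\tfrac{1}{k+1}\sum_i \theta_i\bigr) = \sum_i U_i$, the hypothesis $\rk(F) = \sum_i r_i$ forces the sum to be direct, so
\[
U \;=\; \bigoplus_{i=0}^{k} U_i.
\]
For each $i$ I would diagonalize $\theta_i = \sum_{j=1}^{r_i} p_j^{(i)} \otimes \ol{p_j^{(i)}}$ with $p_1^{(i)}, \dots, p_{r_i}^{(i)}$ a basis of $U_i$, and concatenate to a basis $p_1, \dots, p_r$ of $U$, where $r = \sum_i r_i = \rk(F)$. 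In this basis each $\theta_i$ becomes the block-diagonal Hermitian matrix with $I_{r_i}$ in block $i$ and zeros elsewhere, which is a real diagonal matrix.

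The dimension formula for faces of $\H^\plus(f)$ identifies $\dim F$ with the $\R$-dimension of $\ker(\mu)$, where $\mu \colon \sfH_2 U \to U\ol{U}$ is the multiplication map; hence $\dim_\R \ker(\mu) = k$. The $k$ differences $\theta_1 - \theta_0, \dots, \theta_k - \theta_0$ all lie in $\ker(\mu)$ and are $\R$-linearly independent, because the non-zero blocks of $\theta_i - \theta_0$ sit at pairwise distinct positions $i$; hence they form an $\R$-basis of $\ker(\mu)$. Since each of them is a diagonal matrix, so is every element of $\ker(\mu)$. Rescaling to $q_j := p_j/\sqrt{k+1}$ yields $\sum_j q_j\,\ol{q_j} = \tfrac{1}{k+1}\sum_i \mu(\theta_i) = f$, while the $\R$-linear relations among the products $q_j\,\ol{q_l}$ are the same as those among the $p_j\,\ol{p_l}$ and so, by the previous step, only involve the Hermitian squares $q_j\,\ol{q_j}$. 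Proposition \ref{prop:only_rel_among_herm_squares_means_polyhedral} then gives that $F$ is polyhedral.

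The main obstacle is to extract the direct-sum decomposition $U = \bigoplus_i U_i$ from the additivity of ranks and to realize it in a single concatenated basis that simultaneously diagonalizes every $\theta_i$; after that the dimension count identifies $\ker(\mu)$ with the $\R$-span of the diagonal differences $\theta_i - \theta_0$, and the conclusion reduces to a direct appeal to Proposition \ref{prop:only_rel_among_herm_squares_means_polyhedral}.
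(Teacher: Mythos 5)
Your proof is correct and follows essentially the same route as the paper: concatenate diagonalizing bases of the $\theta_i$ (using rank additivity to get a basis of $U$), observe the $k$ obvious Hermitian-square relations coming from $\mu(\theta_i)=f$, use $\dim(F)=k$ to conclude these exhaust $\ker(\mu)$, and invoke Proposition \ref{prop:only_rel_among_herm_squares_means_polyhedral}. You additionally make explicit the rescaling $q_j = p_j/\sqrt{k+1}$ needed so that $\sum_j q_j\ol{q_j}=f$, a small step the paper leaves implicit.
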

\begin{proof}
We write $\theta_i = \sum_{j=1}^{r_i} p_j^{(i)} \otimes \ol{p_j^{(i)}}$, where $r_i = \rk(\theta_i)$. 
By the assumption on the rank of $F$, $(p_j^{(i)} : 1 \leq i \leq k, 1 \leq j \leq r_i)$ is linearly independent.
Now we have at least the $k$ relations \[
\sum_{j=1}^{r_0} p_j^{(0)} \ol{p_j^{(0)}} = \sum_{j=1}^{r_i}  p_j^{(i)} \ol{p_j^{(i)}} \quad (i = 1, \dots, k).
\] 
Because of $\dim(F) = k$, there are no further independent relations. 
Hence, $F$ is polyhedral (see Proposition \ref{prop:only_rel_among_herm_squares_means_polyhedral}).
\end{proof}

\begin{rem} \label{rem:max_rk_min_dim_binary_forms_herm}
In particular, if $f$ is a binary form and if the supporting face $F$ of $k+1$ rank-one extreme points of $\H^\plus(f)$ has rank $k+1$ (which is the maximal possible rank of $F$) and dimension $k$ (which is the minimal possible dimension in this situation), then $F$ is polyhedral. \\
These conditions also imply that all vertices of $F$ are rank-one tensors. 
Indeed, if $F_0 \subset F_1 \subset \dots \subset F_k := F$ was a chain of faces of $F$, where  $\rk(F_0) \geq 2$ and $\dim(F_j) = j$ for all $j$, we would have $\rk(F) \geq k+2$, a contradiction.
So by Theorem \ref{thm:dim_bound_herm}, $F$ is even a simplex.
\end{rem}

In our construction of polyhedral faces, we will often be in the situation that we want to find a form $s \in \C[x, y]_k$ which does not divide any nonzero element of a given subspace $U \subset \C[x, y]_d$ of dimension $k = \deg(s)$.

\begin{prop} \label{prop:construction_special_basis}
Let $U \subset \C[x]_{\leq d}$ be a linear subspace of dimension $\dim(U) = k \leq d$.
Then there are $\lambda_1, \dots, \lambda_k \in \C$ and a basis $p_1, \dots, p_k$ of $U$ such that $p_l(\lambda_l) \neq 0$ and $p_j(\lambda_l) = 0$ for all $j > l$. 
In particular, whenever $p \in U$ vanishes in $\lambda_1, \dots, \lambda_k$ then $p = 0$.
\end{prop}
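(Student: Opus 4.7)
The plan is to prove this by induction on $k = \dim(U)$, constructing $\lambda_1$ first and then applying the hypothesis to the codimension-one subspace of polynomials in $U$ vanishing at $\lambda_1$.

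For the base case $k=1$, pick any nonzero $p_1 \in U$; since $p_1$ has only finitely many roots, there exists $\lambda_1 \in \C$ with $p_1(\lambda_1) \neq 0$, and the conditions hold vacuously.

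For the inductive step, assume the statement holds in dimension $k-1$. First I would choose $\lambda_1 \in \C$ such that the evaluation functional $\mathrm{ev}_{\lambda_1} \colon U \to \C$, $p \mapsto p(\lambda_1)$, is nonzero; such a $\lambda_1$ exists because any fixed nonzero element of $U$ has only finitely many roots. Set $U_1 = \ker(\mathrm{ev}_{\lambda_1}|_U)$, a subspace of $\C[x]_{\leq d}$ of dimension $k-1$. Apply the induction hypothesis to $U_1$ to obtain $\lambda_2, \dots, \lambda_k \in \C$ and a basis $p_2, \dots, p_k$ of $U_1$ such that $p_l(\lambda_l) \neq 0$ and $p_j(\lambda_l) = 0$ for all $k \geq j > l \geq 2$. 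Finally, pick $p_1 \in U$ with $p_1(\lambda_1) \neq 0$; since $p_1 \notin U_1$, the tuple $p_1, p_2, \dots, p_k$ is a basis of $U$. The required conditions are now immediate: for $l = 1$ every $p_j$ with $j \geq 2$ lies in $U_1$ and hence vanishes at $\lambda_1$, while the conditions for $l \geq 2$ are given by induction.

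For the final assertion, I would argue by a short triangular elimination: write $p = \sum_{j=1}^k a_j p_j$ and evaluate successively at $\lambda_1, \lambda_2, \dots, \lambda_k$. At $\lambda_l$, the relations $p_j(\lambda_l) = 0$ for $j > l$ together with the already-deduced vanishing $a_1 = \dots = a_{l-1} = 0$ reduce the expression $p(\lambda_l) = 0$ to $a_l p_l(\lambda_l) = 0$, and since $p_l(\lambda_l) \neq 0$ we conclude $a_l = 0$. Iterating gives $p = 0$.

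The only real subtlety is ensuring that the induction hypothesis can be applied in the same ambient space $\C[x]_{\leq d}$, but this is immediate since $U_1 \subset U \subset \C[x]_{\leq d}$ still satisfies $\dim(U_1) = k-1 \leq d$. Thus there is no genuine obstacle, and the argument is essentially a Gaussian-elimination-style construction dualized through point evaluations.
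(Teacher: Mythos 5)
Your proof is correct, and it is essentially the same argument as the paper's: the paper carries out the same Gaussian-elimination-through-point-evaluations explicitly and iteratively (choosing $\lambda_l$ outside the zero set of the current $l$-th basis vector and subtracting multiples to clear the other entries at $\lambda_l$), whereas you phrase it as an induction via the kernel of the evaluation functional $\mathrm{ev}_{\lambda_1}$. The two formulations are interchangeable; the paper's explicit iterative version has the minor advantage that it is directly reused in the subsequent remark to exhibit the set of admissible $(\lambda_1,\dots,\lambda_k)$ as a Zariski-open dense subset of $\C^k$, which is needed for the genericity argument later.
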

\begin{proof}
We use the following inductive procedure to find scalars $\lambda_1, \dots, \lambda_k \in \C$ and to construct a basis of $U$ with the desired properties.  
Start with any basis $q_1^{(1)}, \dots, q_k^{(1)}$ of $U$.
If $l \in \{1, \dots, k-1\}$, choose $\lambda_l \in \C \setminus \mathcal{V}(q_l^{(l)})$. 
Then set  \[
q_j^{(l+1)} := q_l^{(l)}(\lambda_l) \cdot q_j^{(l)} - q_j^{(l)}(\lambda_l) \cdot q_l^{(l)}, \quad j = l+1, \dots, k.
\]
This guarantees that $q_l^{(l)}(\lambda_l) \neq 0$ and $q_j^{(l+1)}(\lambda_l) = 0$ for all $j \geq l+1$. 
Furthermore, $q_1^{(1)}, q_2^{(2)}, \dots, q_l^{(l)}, q_{l+1}^{(l+1)}, \dots, q_k^{(l+1)}$ is again a basis of $U$. 
Also, if $\zeta \in \C$ was a common root of the polynomials $q_l^{(l)}, \dots, q_k^{(l)}$ in the step before (as is the case for $\lambda_1, \dots, \lambda_{l-1}$), it still is a common root of $q_l^{(l)}, q_{l+1}^{(l+1)}, \dots, q_k^{(l+1)}$. 
By construction, setting $p_l := q_l^{(l)}$ ($l = 1, \dots, k$) yields the desired basis. \\
Suppose $p \in U$ with $p(\lambda_l) = 0$ for $l = 1, \dots, k$. 
We have $p = \sum_{j=1}^k \mu_j p_j$ for some $\mu_j \in \C$ and \[
0 = p(\lambda_1) = \sum_{j=1}^k \mu_j p_j(\lambda_1) = \mu_1 p_1(\lambda_1).
\] 
Since $p_1(\lambda_1) \neq 0$ we deduce $\mu_1 = 0$.
Iterating this argument, one successively shows that all $\mu_j$'s are zero.
\end{proof}

\begin{rem} \label{rem:choose_scalars_generically}
We see from the proof of Proposition \ref{prop:construction_special_basis} that (for a fixed subspace $U$) the scalars $\lambda_1, \dots, \lambda_k \in \C$ can be chosen from an open dense subset of $\C^k$. 
Indeed: 
We start with an arbitrary basis $p_1^{(1)}, \dots, p_k^{(1)} \in \C[x]$ of $U$. 
If $l \in \{1, \dots, k-1\}$, we set 
\begin{IEEEeqnarray*}{rCl}
p_j^{(l+1)}(x_1, \dots, x_l, x_{l+1}) 
& := & 
p_l^{(l)}(x_1, \dots, x_{l-1}, x_l) \cdot p_j^{(l)}(x_1, \dots, x_{l-1}, x_{l+1})\\
&& -\> p_j^{(l)}(x_1, \dots, x_{l-1}, x_l) \cdot p_l^{(l)}(x_1, \dots, x_{l-1}, x_{l+1}) \\
& \in & \C[x_1, \dots, x_l, x_{l+1}],
\end{IEEEeqnarray*}
for all $j = l+1, \dots, k$.
For example, \[
p_j^{(2)}(x_1, x_2) =  p_1^{(1)}(x_1) \cdot p_j^{(1)}(x_2) - p_j^{(1)}(x_1) \cdot p_1^{(1)}(x_2).
\]
So if $p_1^{(1)}(\lambda_1) \neq 0$ then $p_j^{(2)}(\lambda_1, x) \in \C[x]$ coincides with $q_j^{(2)} \in \C[x]$ from the construction in Proposition \ref{prop:construction_special_basis}. 
We set $q_j := p_j^{(j)} \in \C[x_1, \dots, x_j] \subset \C[x_1, \dots, x_k]$.
Using induction, one can show that the set of suitable scalars contains 
\[
\left\{ (\lambda_1, \dots, \lambda_k) \in \C^k : \prod_{j=1}^k q_j(\lambda_1, \dots, \lambda_k) \neq 0 \right\}.
\]
This set is nonempty and open in the Zariski topology on $\C^k$, and therefore also open and dense in the Euclidean topology. \\
The set of all $(\lambda_1, \dots, \lambda_k) \in \C^k$ with $\lambda_{j'} \neq \lambda_j, \ol{\lambda_j}$ for all $j' \neq j$ is open and dense as well. 
Therefore, we will assume that 
$\lvert \{ \lambda_j, \ol{\lambda_j} : j = 1, \dots, k \} \rvert = 2k$ 
whenever needed.
\end{rem} 

We find polyhedral faces using the rank-one extreme points of $\H^\plus(f)$.
The set of these points is denoted by $\Ex_1(\H^\plus(f))$.

\begin{thm} \label{thm:construction_polyhedral_faces_hermitian}
Let $k \in \N$ and $d = \choose{k+1}{2}$. 
Then there exists a positive binary form $f \in \R[x, y]_{2d}$ with distinct roots such that $\H^\plus(f)$ contains a simplex face $F$ 
with $(\rk(F), \dim(F)) = (k+1, k)$ and $\Ex(F) \subset \Ex_1(\H^\plus (f))$.
\end{thm}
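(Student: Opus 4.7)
The strategy is to apply Corollary \ref{cor:max_rk_min_dim_sum_polyhedral} together with Remark \ref{rem:max_rk_min_dim_binary_forms_herm}: it suffices to exhibit a positive $f \in \R[x, y]_{2d}$ with $2d$ distinct roots and rank-one tensors $\theta_m = p_m \otimes \ol{p_m} \in \H^\plus(f)$, $m = 0, \dots, k$, whose supporting face $F$ has rank $k+1$ and dimension $k$; then $F$ is automatically a $k$-simplex with rank-one vertices. Writing $U = \spn_\C(p_0, \dots, p_k)$, the rank condition is linear independence of the $p_m$, and the dimension formula $\dim F = (k+1)^2 - \dim_\C(U\ol{U})$ combined with $\dim_\C \C[x,y]_{2d} = 2d+1 = (k+1)^2 - k$ rewrites the dimension condition as the maximality requirement $U\ol{U} = \C[x, y]_{2d}$.

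\medskip

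To construct such data, pick $a_1, \dots, a_d \in \C \setminus \R$ so that the $2d$ numbers $a_\ell, \ol{a_\ell}$ are pairwise distinct, set $f = \prod_\ell (x - a_\ell y)(x - \ol{a_\ell} y) \in \R[x, y]_{2d}$, and fix any bijection $\ell \mapsto (i_\ell, j_\ell)$ between $\{1, \dots, d\}$ and $\{(i, j) : 0 \leq i < j \leq k\}$. For each $m$, set
\[
p_m \,=\, \prod_{j_\ell = m}(x - \ol{a_\ell}\, y) \cdot \prod_{j_\ell \neq m}(x - a_\ell \, y) \,\in\, \C[x, y]_d;
\]
thus $p_m$ is obtained from $\prod_\ell (x - a_\ell y)$ by conjugating the $m$ factors at positions with $j_\ell = m$, and $p_m \ol{p_m} = f$. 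Linear independence of $p_0, \dots, p_k$ follows from evaluation: for each $m \geq 1$, pick any $\ell^\ast$ with $j_{\ell^\ast} = m$; at $[a_{\ell^\ast} : 1] \in \P^1(\C)$, the polynomial $p_{m'}$ vanishes iff $x - a_{\ell^\ast} y$ is a factor of $p_{m'}$, iff $m' \neq m$, so any linear relation forces $c_m = 0$ for all $m \geq 1$, then $c_0 = 0$. For the identity $U\ol{U} = \C[x, y]_{2d}$: since the $k+1$ diagonal generators $p_m \ol{p_m}$ all equal $f$ (giving $k$ obvious relations), this reduces to linear independence over $\C$ of the $2d+1$ polynomials $\{f\} \cup \{p_i \ol{p_j} : i \neq j\}$ in the $(2d+1)$-dimensional space $\C[x, y]_{2d}$.

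\medskip

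This last step is the main obstacle. The plan is to use the isomorphism $\C[x, y]_{2d} \xrightarrow{\sim} \C^{2d+1}$ given by evaluation at the $2d+1$ distinct points $[a_1 : 1], \dots, [a_d : 1], [\ol{a_1} : 1], \dots, [\ol{a_d} : 1], [1 : 0]$, and to verify that the resulting $(2d+1) \times (2d+1)$ evaluation matrix $M$ is nonsingular for a suitable choice of $a_\ell$. The factorization
\[
p_i \ol{p_j} \,=\, \prod_{j_\ell = i}(x - \ol{a_\ell}\, y)^2 \cdot \prod_{j_\ell = j}(x - a_\ell \, y)^2 \cdot \prod_{j_\ell \notin \{i, j\}}(x - a_\ell y)(x - \ol{a_\ell} y) \qquad (i < j)
\]
shows that $p_i \ol{p_j}$ is nonzero at $[a_\ell : 1]$ precisely when $j_\ell = i$, nonzero at $[\ol{a_\ell} : 1]$ precisely when $j_\ell = j$, and has leading coefficient $1$, whereas $f$ vanishes at all of $[a_\ell : 1], [\ol{a_\ell} : 1]$ and has value $1$ at $[1:0]$. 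The $f$-row isolates the last column of $M$; Laplace-expanding reduces to the determinant of a sparse $2d \times 2d$ matrix. Partitioning columns by the value $m = j_\ell$ and rows by the underlying pair $(i, j)$ exhibits a block structure in which each ``diagonal'' block is an explicit polynomial determinant in the $a_\ell$'s, manifestly nonzero for generic choices; since full rank of $M$ is a Zariski-open condition on $(a_1, \dots, a_d) \in \C^d$, Remark \ref{rem:choose_scalars_generically} produces the required generic configuration. Corollary \ref{cor:max_rk_min_dim_sum_polyhedral} together with Remark \ref{rem:max_rk_min_dim_binary_forms_herm} then yields the desired $k$-simplex face $F$.
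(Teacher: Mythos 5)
Your strategy is genuinely different from the paper's: instead of an induction on $k$ that multiplies a smaller face subspace $U'$ by cofactors $s, \bar s$ (as the paper does in the proof of this theorem), you define all $p_0,\dots,p_k$ simultaneously via a combinatorial assignment of the $d=\binom{k+1}{2}$ conjugate pairs of roots to the pairs $(i,j)$ with $0\le i<j\le k$, conjugating exactly the $m$ factors with $j_\ell=m$ to obtain $p_m$. The framework is correct: the rank condition reduces to linear independence of the $p_m$ (which your evaluation argument handles cleanly), and the dimension condition reduces to $U\ol U = \C[x,y]_{2d}$, which by the $k$ trivial relations among the diagonals is equivalent to linear independence of $\{f\}\cup\{p_i\ol{p_j}: i\neq j\}$. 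Your factorization of $p_i\ol{p_j}$ and the choice of $2d+1$ evaluation points are also correct, and the resulting evaluation matrix really does have a block-triangular structure (group rows by $\max(i,j)$ and columns by $j_\ell$, processing $m=k,k-1,\dots,1$; the rows $p_a\ol{p_b}$ with $a,b<m$ vanish at every $a_\ell$ and $\ol{a_\ell}$ with $j_\ell=m$, and within each level the block is anti-diagonal in two $m\times m$ pieces).

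The gap is in the last step. After the block reduction you assert that each diagonal $m\times m$ block ``is an explicit polynomial determinant in the $a_\ell$'s, manifestly nonzero for generic choices,'' and you invoke Remark~\ref{rem:choose_scalars_generically}. Two problems. First, Remark~\ref{rem:choose_scalars_generically} concerns the scalar-selection procedure of Proposition~\ref{prop:construction_special_basis} for a \emph{fixed} subspace; it does not apply to the evaluation matrix you built, so the reference does no work here. Second, and more substantively, Zariski-openness of the full-rank locus only helps if you also know the locus is \emph{nonempty}, i.e.\ that the determinant is not identically zero as a polynomial in the $a_\ell$'s. That is precisely what needs to be proven, and it is not manifest: after factoring out common column factors, the $(b,i)$ entry of the level-$m$ block is $\prod_{i'<b}(c_{im}-c_{i'b})/(c_{im}-\ol{c_{i'b}})$ (writing $c_{ij}$ for the root assigned to the pair $(i,j)$), and nonvanishing of its determinant for generic $c$ requires an argument (for $m=2$ one can compute it to be a nonzero multiple of $(\ol{c_{12}}-\ol{c_{02}})(\ol{c_{01}}-c_{01})$, but the general case needs either an explicit identity or an inductive specialization). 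The paper's inductive construction sidesteps exactly this issue: each step only has to verify a direct-sum decomposition of $U\ol U$, which is handled by a short divisibility argument rather than a determinant computation. Either carry out the block determinants honestly (an induction over $m$ would likely mirror the paper's induction) or exhibit a single concrete configuration $(a_1,\dots,a_d)$ with $\det M\neq 0$ to seed the genericity argument.
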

\begin{proof}
We proceed by induction on $k$. 
Let $k = 1$, so $d = 1$. 
If $f \in \R[x, y]_2$ is positive, then $\H^\plus(f)$ is an interval of rank $d+1 = 2 = k+1$ whose extreme points have rank one.
Now assume that $k \geq 2$, $d' = \choose{k}{2}$ and that we have a positive binary form $g \in \R[x, y]_{2d'}$ with distinct roots such that $\H^\plus(g)$ contains a polyhedral face $F'$ with $(\rk(F'), \dim(F')) = (k, k-1)$ and $\Ex(F') \subset \Ex_1(\H^\plus(g))$.
Then $U' := \scrU(F')$ is spanned by some linearly independent $p_1, \dots, p_k \in \C[x, y]_{d'}$ with $g = p_j \ol{p_j}$ ($j = 1, \dots, k$). 
Moreover, $\dim_\C(U') = k$ and $U' \ol{U'} = \C[x, y]_{2d'}$.
Let $\alpha_1, \dots, \alpha_{d'}, \ol{\alpha_1} \dots, \ol{\alpha_{d'}} \in \C$ denote the (distinct) roots of $g(x, 1)$. 
By Proposition \ref{prop:construction_special_basis}, we find $\beta_1, \dots, \beta_k \in \C$ such that \[ 
\lvert \{ \alpha_j, \ol{\alpha_j} : j = 1, \dots, d' \} \cup \{ \beta_j, \ol{\beta_j} : j = 1, \dots, k \} \rvert = 2d' + 2k = 2 \choose{k+1}{2} = 2d,
\]
and that whenever $p \in U'$ vanishes in $(\beta_1 : 1), \dots, (\beta_k : 1) \in \P^1$ then $p = 0$. 
We define $s := \prod_{j=1}^k (x-\beta_j y) \in \C[x, y]_k$, 
and $t := p_l$ for some $l \in \{1, \dots, k\}$, e.g.~$t=p_1$. 
Then $f := (st) \ol{(st)} = s \ol{s} \cdot g \in \R[x, y]_{2d}$ has distinct roots.
Consider $F := \suppface(\theta_0, \dots, \theta_k) \subset \H^\plus(f)$, where $\theta_0 = st \otimes \ol{st}$ and $\theta_j = \ol{s}p_j \otimes s \ol{p_j}$ for $j = 1, \dots, k$.
We have to show that $(\rk(F), \dim(F)) = (k+1, k)$ (that $F$ is a simplex is then clear by \ref{rem:max_rk_min_dim_binary_forms_herm}).
The rank of $F$ is given by the dimension of the subspace \[
U := \scrU(F) = \spn(st, \ol{s}p_1, \dots, \ol{s}p_k) = \C \cdot st + \ol{s}U'.
\]
If $\alpha st = \ol{s}p$ for some $\alpha \in \C$ and $p \in U'$, then $s$ divides $p$ since $s$ and $\ol{s}$ are coprime. 
Therefore, $p(\beta_j, 1) = 0$ for all $j$, which implies $p = 0$. 
We conclude that $U = \C \cdot st \oplus \ol{s}U'$ and $\rk(F) = \dim_\C(U) = \dim_\C(U') + 1 = k+1$.
Furthermore, 
\begin{IEEEeqnarray*}{rCl}
U \ol{U} & = & s \ol{s} \cdot U' \ol{U'} + s^2 t \ol{U'} + \ol{s^2 t} U' + \C \cdot \underbrace{(st)\ol{(st)}}_{\in s \ol{s} \cdot U' \ol{U'}} \\ 
& = & s \ol{s} \cdot \C[x, y]_{2d'} + s^2 t \ol{U'} + \ol{s^2 t} U'.
\end{IEEEeqnarray*}
We first show that $s^2 t \ol{U'} \cap \ol{s^2 t} U' = \{0\}$. 
Let $u, v \in U'$ such that $s^2 t \ol{u} = \ol{s^2 t} v$. 
Since $s^2 t$ and $\ol{s^2 t}$ have no roots in common, this implies that $s^2 t$ divides $v$. 
So if $v \neq 0$ we would get \[
\deg(v) \geq \deg(s^2 t) = 2 \deg(s) + \deg(t) > \deg(t) = \deg(v).
\] 
Therefore, the sum $s^2 t \ol{U'} + \ol{s^2 t} U'$ is direct. 
Now we show that also
\begin{equation} \label{eq:construction_polyhedral_faces_hermitian:directsum}
(s \ol{s} \cdot \C[x, y]_{2d'}) \cap (s^2 t \ol{U'} \oplus \ol{s^2 t} U') = \{0\}.   \tag{$\ast$} 
\end{equation}
Suppose that $s \ol{s} q = s^2 t \ol{u} + \ol{s^2 t} v$, where $q \in \C[x, y]_{2d'}$ and $u, v \in U'$. 
Then \[
s(\ol{s} q - st\ol{u}) = \ol{s^2 t} v,
\]
and hence $s$ divides $v$. 
But since $v$ is in $U'$, $v = 0$ by the choice of $s$. 
Analogously, we see that $u = 0$. 
This proofs (\ref{eq:construction_polyhedral_faces_hermitian:directsum}). 
To sum up, 
\begin{IEEEeqnarray*}{rCl}
\dim_\C(U \ol{U}) & = & \dim_\C( \C[x, y]_{2d'} ) + 2 \dim_\C(U') \\
& = & 2d' + 1 + 2k \\
& = & 2(d'+k)+1 \\
& = & 2d+1,
\end{IEEEeqnarray*}
and therefore $\dim(F) = (k+1)^2 - 2 \choose{k+1}{2} - 1 = k$.
\end{proof}

\begin{dfn}
Let $k \in \N$ and $d = \choose{k+1}{2}$. 
We define $P_{2d}$ to be the set of all $f \in \R[x, y]_{2d}$ such that $f \in \interior(\Sigma_{2d})$ has distinct roots and $\H^\plus(f)$ contains a simplex face $F$ with $(\rk(F), \dim(F)) = (k+1, k)$ and $\Ex(F) \subset \Ex_1(\H^\plus(f))$. 
\end{dfn}

\begin{lab}
Consider the set $W$ of all tuples $(f, p_0, p_1, \dots, p_k) \in \R[x, y]_{2d} \times \C[x, y]_d^{k+1}$ such that the following hold: 
$f \in \interior(\Sigma_{2d})$ has distinct roots (i.e.~the discriminant $D(f(x,1)) \neq 0$), 
$f = p_0 \ol{p_0} = \dots = p_k \ol{p_k}$, 
and for $U = \spn_\C(p_0, \dots, p_k) \subset \C[x, y]_d$ we have $\dim_\C(U) = k+1$ and $U \ol{U} = \C[x, y]_{2d}$. 
Separating real and imaginary part of the coefficients, we can express all these conditions by polynomial equations and inequalities (over $\R$) in those coefficients. 
For instance, $U \ol{U} = \C[x, y]_{2d}$ if and only if not all $(2d+1)$-minors of the matrix containing the coefficients of $p_j \ol{p_{j'}}$ ($0 \leq j, j' \leq k$) vanish. 
These minors are polynomial expressions in the real and imaginary parts of the coefficients of $p_0, \dots, p_k$.
Therefore, $W$ can be seen as an $\R$-semialgebraic set. 
By Remark \ref{rem:max_rk_min_dim_binary_forms_herm}, $P_{2d}$ is the projection of $W$ onto the first component, and hence semialgebraic itself.
\end{lab}

\begin{thm} \label{thm:generic_herm_case}
Let $k \in \N$ and $d \geq \choose{k+1}{2}$. 
The Hermitian Gram spectrahedron of a generic nonnegative binary form $f \in \R[x, y]_{2d}$ contains a simplex face $F$ with $(\rk(F), \dim(F)) = (k+1, k)$ and $\Ex(F) \subset \Ex_1(\H^\plus(f))$.
\end{thm}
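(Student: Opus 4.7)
The plan is to reduce to the critical case $d = d_0 := \choose{k+1}{2}$ already handled by Theorem \ref{thm:construction_polyhedral_faces_hermitian}: first show that $P_{2d_0}$ contains a Euclidean open neighborhood of the form $g_0$ produced there, and then lift to arbitrary $d > d_0$ by multiplication with factors $h\ol{h}$ via Proposition \ref{ext:CPSV_factor_isomorphic_to_face}.

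For the openness of $P_{2d_0}$, I would apply the implicit function theorem to the real-analytic map $\nu \colon \C[x,y]_{d_0} \to \R[x,y]_{2d_0}$, $p \mapsto p\ol{p}$. At each simplex vertex $p_j$ of $\H^\plus(g_0)$, the differential $\delta p \mapsto 2\operatorname{Re}((\delta p)\ol{p_j})$ has $1$-dimensional kernel $\R \cdot ip_j$ (using that the $p_j$ produced by Theorem \ref{thm:construction_polyhedral_faces_hermitian} have no real roots and hence are not $\C$-proportional to a real polynomial), so by dimension count it surjects onto $\R[x,y]_{2d_0}$. The implicit function theorem then yields real-analytic sections $g \mapsto p_j(g)$ on a Euclidean neighborhood $V_0$ of $g_0$, with $p_j(g_0) = p_j$ and $p_j(g)\ol{p_j(g)} = g$. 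Shrinking $V_0$ as necessary, the open non-degeneracy conditions---linear independence of the $p_j(g)$'s, $U(g)\ol{U(g)} = \C[x,y]_{2d_0}$ (with $U(g) := \spn_\C(p_0(g), \dots, p_k(g))$), and distinct roots of $g$---all persist on $V_0$. Remark \ref{rem:max_rk_min_dim_binary_forms_herm} then shows that the supporting face of the rank-one tensors $p_j(g)\otimes\ol{p_j(g)}$ is a $k$-simplex of the required form; hence $V_0 \subset P_{2d_0}$.

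For $d > d_0$, I would consider the polynomial map
\[
\Phi \colon (\C[x,y]_{d-d_0} \setminus \{0\}) \times V_0 \to \R[x,y]_{2d}, \quad (h, g) \mapsto h\ol{h}g,
\]
restricted to the Euclidean open subset $W$ on which $h$ has no real roots and $g$ is coprime to $h\ol{h}$ (the latter ensuring $f := h\ol{h}g$ has distinct roots). By Proposition \ref{ext:CPSV_factor_isomorphic_to_face}, the assignment $q\otimes\ol{q} \mapsto (hq)\otimes\ol{hq}$ is a linear isomorphism from $\H^\plus(g)$ onto a face of $\H^\plus(f)$ preserving rank-one tensors and simplex structure, so $\Phi(W)$ is contained in the natural extension of $P_{2d_0}$ to the present degree (still semialgebraic by the argument preceding the theorem). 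A direct computation gives $d\Phi_{(h,g)}(\delta h, \delta g) = 2\operatorname{Re}((\delta h)\ol{h}) \cdot g + h\ol{h} \cdot \delta g$, with image $g \cdot \R[x,y]_{2(d-d_0)} + h\ol{h} \cdot \R[x,y]_{2d_0}$; by coprimality these two summands meet only in $\R \cdot gh\ol{h}$, giving full real dimension $(2(d-d_0)+1)+(2d_0+1)-1 = 2d+1$. Hence $\Phi$ is a submersion on $W$, so $\Phi(W)$ is a nonempty Euclidean open subset of $\R[x,y]_{2d}$ contained in $P_{2d}$, which is the generic conclusion of the theorem.

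The principal technical obstacle is the openness step: simultaneously verifying surjectivity of $d\nu$ at each $p_j$ and stably tracking the three open non-degeneracy conditions---linear independence of the $p_j(g)$'s, the maximal-image identity $U(g)\ol{U(g)} = \C[x,y]_{2d_0}$, and distinct roots of $g$---under perturbation.
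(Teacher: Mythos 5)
Your implicit function theorem and submersion arguments are technically sound (the kernel of $\delta p \mapsto 2\operatorname{Re}((\delta p)\ol{p_j})$ is indeed $\R\cdot ip_j$ when $p_j$ has no real roots, the dimension count gives surjectivity, and the coprimality argument for the image of $d\Phi$ is correct). However, there is a genuine gap at the very end: you conclude that $\Phi(W)$ is a \emph{nonempty Euclidean open subset} of $\R[x,y]_{2d}$ contained in $P_{2d}$ and declare this ``the generic conclusion of the theorem.'' That is not the standard of genericity the theorem is asking for. The paper interprets ``generic'' as ``the complement $\Sigma_{2d}\setminus P_{2d}$ is contained in a hypersurface,'' and a semialgebraic set that merely contains some open ball can still have a full-dimensional complement (for instance a half-space). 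Your argument establishes that $P_{2d}$ has nonempty interior, which is weaker.

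The paper's proof closes this gap by establishing \emph{density}: it shows $\ol{P_{2d}} = \Sigma_{2d}$ by an induction on $k$ (factoring any $h \in \interior(\Sigma_{2d})$ as $h = fg$ with $f$ of degree $2d'$, $g$ of degree $2k$, perturbing $f$ into $P_{2d'}$ using the induction hypothesis, and choosing a suitable cofactor near $g$ via Remark~\ref{rem:choose_scalars_generically}). Combined with semialgebraicity and Proposition~2.8.13 of \cite{BCR}, density gives $\dim(\Sigma_{2d}\setminus P_{2d}) = \dim(\ol{P_{2d}}\setminus P_{2d}) < \dim\Sigma_{2d}$, which is what ``contained in a hypersurface'' requires. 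Your IFT argument yields openness only in a neighborhood of the specific $g_0$ constructed in Theorem~\ref{thm:construction_polyhedral_faces_hermitian}; to make your approach work you would still need to show that every positive form of degree $2d_0$ can be approximated by elements of $P_{2d_0}$, which is exactly the density step you omitted. Put differently, your local-submersion technique is a clean and genuinely different route to the \emph{openness} of $P_{2d}$, but openness alone does not suffice; the density argument (or an equivalent) is indispensable.
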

\begin{proof}
It suffices to prove the theorem for $d = \choose{k+1}{2}$.
We have to show that $\Sigma_{2d} \setminus P_{2d}$ is contained in some hypersurface of $\R[x, y]_{2d}$. 
Since $P_{2d}$ is semialgebraic, by Proposition 2.8.13 in \cite{BCR} we have \[
\dim(\ol{P_{2d}} \setminus P_{2d}) < \dim(P_{2d}), 
\]
where $\ol{P_{2d}}$ is the Euclidean closure of $P_{2d}$. 
In particular, we will get that $\Sigma_{2d} \setminus P_{2d}$ is contained in some hypersurface if we can show that $P_{2d}$ is dense in $\Sigma_{2d}$. \\
We will proof this by induction on $k$. 
For $k = 1$ (i.e.~$d = 1$) the Hermitian Gram spectrahedron of any positive $f \in \R[x, y]_2$ is an interval of rank $2$ whose extreme points have rank one. 
Therefore, $P_1 = \interior(\Sigma_2)$. 
Now let $k \geq 2$, $d = \choose{k+1}{2}$, and define $d' = \choose{k}{2} = d - k$.
Let $h \in \interior(\Sigma_{2d})$ and let $U \subset \interior(\Sigma_{2d})$ be an open neighborhood of $h$. 
We have to show that $U$ contains some $\tilde{h} \in P_{2d}$. 
Consider the multiplication map \[
\psi \colon \R[x, y]_{2d'} \times \R[x, y]_{2k} \to \R[x, y]_{2d}, \quad (f, g) \mapsto fg.
\]
Without loss of generality, we can assume that the coefficient of $x^{2d}$ in $h$ is equal to $1$. 
Since $h$ is positive on $\P^1(\R)$ and $\deg(h) = 2d = 2(d'+k)$, we can write \[
h = \underbrace{ \prod_{j=1}^{d'} (x-\alpha_j y)(x-\ol{\alpha_j}y) }_{=: f \in \interior(\Sigma_{2d'})} 
\cdot 
\underbrace{ \prod_{j=1}^k (x-\beta_j y)(x-\ol{\beta_j}y) }_{=: g \in \interior(\Sigma_{2k})},
\] 
so that $h = \psi(f, g)$. 
The fact that $\psi$ is continuous implies that $\psi^{-1}(U)$ is open. 
In particular, there are open neighborhoods $V \subset \interior(\Sigma_{2d'})$ of $f$ and $W \subset \interior(\Sigma_{2k})$ of $g$, respectively, such that $\psi(V \times W) \subset U$.  
By induction, $P_{2d'} \subset \Sigma_{2d'}$ is dense in the Euclidean topology.
Therefore, we find a polynomial $\tilde{f} \in P_{2d'} \cap V$. 
Now, the set of suitable cofactors for $\tilde{f}$ is dense in $\Sigma_{2k}$ (see the proof of Theorem \ref{thm:construction_polyhedral_faces_hermitian} and Remark \ref{rem:choose_scalars_generically}).
This means that there is $\tilde{g} \in W$ such that $\tilde{h} := \tilde{f} \tilde{g} \in P_{2d}$.
Moreover, $\tilde{h} = \psi(\tilde{f}, \tilde{g}) \in \psi(V \times W) \subset U$. 
We conclude that $\ol{P_{2d}} = \Sigma_{2d}$, and this completes the proof.
\end{proof}


\section{Polyhedral faces of Gram spectrahedra}
\label{sec:polyhedral_faces_real}
We will use the construction from the Hermitian case to construct polyhedral faces in the (symmetric) Gram spectrahedron of some binary forms. 

\begin{prop} \label{prop:dim_bound_polyhedral_faces_real}
Let $f \in \R[x, y]_{2d}$ be a generic nonnegative binary form of degree $2d$.  
Let $F \subsetneq \Gram(f)$ be a polyhedral face of dimension $k \geq 1$. 
Then $\choose{k+3}{2} \leq 2d-2$.
\end{prop}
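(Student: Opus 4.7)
The plan is to combine the dimension identity $\dim F = \choose{r+1}{2} - \dim(UU)$ (\cite{Sch}, Prop.~3.6), with $r = \rk F$ and $U = \scrU(F)$, with the trivial bound $\dim(UU) \leq \dim \R[x,y]_{2d} = 2d+1$. These together yield $\choose{r+1}{2} \leq k + 2d + 1$, and an elementary rearrangement shows that this combined with $r \geq k+3$ is equivalent to the claimed $\choose{k+3}{2} \leq 2d - 2$. The whole task therefore reduces to the rank estimate $r \geq k+3$.

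For this I would pick a chain of faces $F_0 \subsetneq F_1 \subsetneq \cdots \subsetneq F_k = F$ of $F$ with $\dim F_j = j$, which exists because $F$ is polyhedral. Since the bijection between faces and face subspaces is inclusion-preserving, the ranks $r_j = \rk F_j$ are strictly increasing; hence $r_k \geq r_1 + (k-1)$, and it suffices to show that, for generic $f$, every $1$-dimensional face of $\Gram(f)$ has rank at least $4$. Applying the dimension identity to such an edge $F_1$ gives $\dim(U_1 U_1) = \choose{r_1+1}{2} - 1$, and Proposition~\ref{prop:UU_geq_2r-1} immediately forces $r_1 \geq 3$; the main task is to rule out the borderline case $r_1 = 3$.

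When $r_1 = 3$, we have $\dim(U_1 U_1) = 5 = 2 r_1 - 1$, attaining equality in Proposition~\ref{prop:UU_geq_2r-1}. Arguing in the spirit of Lemma~\ref{lem:max_face_dim_means_root} and Proposition~\ref{prop:max_face_dim_means_root}, I would take a basis $p_1, p_2, p_3$ of (the dehomogenization of) $U_1$ with pairwise distinct degrees $d_1 < d_2 < d_3$. The five products $p_1^2, p_1 p_2, p_2^2, p_2 p_3, p_3^2$ already span $U_1 U_1$ since their degrees are pairwise distinct, so $p_1 p_3$ lies in their span; balancing top degrees forces the arithmetic progression $d_1 + d_3 = 2 d_2$ and a relation
\[
p_1 p_3 = c \cdot p_2^2 + a \cdot p_1 p_2 + b \cdot p_1^2 \qquad (c \neq 0).
\]
Such a relation imposes a closed non-generic condition on any $f \in U_1 U_1$: the identity $p_1 (p_3 - a p_2 - b p_1) = c \, p_2^2$ yields $p_1 \mid p_2^2$, and a case analysis shows that either $U_1$ has a nontrivial common polynomial factor $p_0$ of positive degree in $\R[x,y]$ (so $p_0^2 \mid f$, forcing a repeated complex root of $f$) or $U_1$ has a special ``compositional'' structure such as $\spn(y^d, y^{d/2} q_2, q_2^2)$ (so $f$ is a quartic polynomial in $q_2$ and $y^{d/2}$). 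For a generic nonnegative $f$ with simple complex roots and no accidental symmetry neither possibility can occur, so $r_1 \geq 4$.

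Assembling, $r_k \geq r_1 + (k-1) \geq k+3$, and substituting into $\choose{r+1}{2} \leq k + 2d + 1$ yields the claimed bound $\choose{k+3}{2} \leq 2d - 2$. The main obstacle, which I expect to be the most delicate technical step, is the extension of Proposition~\ref{prop:max_face_dim_means_root} from codimension-$1$ subspaces of $\R[x,y]_d$ to dimension-$3$ subspaces of arbitrary codimension together with the careful multiplicity-tracking and case analysis described above.
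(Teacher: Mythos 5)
Your proposal follows the same skeleton as the paper's proof: take a maximal chain of faces $F_0 \subsetneq F_1 \subsetneq \dots \subsetneq F_k = F$ with $\dim F_j = j$, note the ranks must strictly increase, conclude $\rk F \geq k+3$, and then rearrange $\dim F \geq \binom{r+1}{2} - (2d+1)$ into the stated inequality. The rearrangement is correct ($\binom{k+4}{2} \leq k + 2d + 1$ simplifies to $\binom{k+3}{2} \leq 2d-2$ since $\binom{k+4}{2} = \binom{k+3}{2} + (k+3)$).

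The only substantive divergence is how you justify $\rk F_1 \geq 4$. The paper dispatches this in one sentence: for generic $f$, $\Gram(f)$ contains neither rank-one points nor positive-dimensional faces of rank three. You instead attempt to \emph{prove} the rank-three exclusion from first principles, and this is where a genuine gap sits. Your structural analysis is on the right track: $\dim(U_1U_1)=5$ attains the Proposition~\ref{prop:UU_geq_2r-1} bound, the degrees $d_1<d_2<d_3$ of a filtered basis must form an arithmetic progression, and the resulting relation $p_1(p_3 - ap_1 - bp_2) = c\,p_2^2$ with $c\neq 0$ forces $p_1 \mid p_2^2$. If you follow this through carefully ($\gcd(p_1,p_2) = a_1 g_0$ with $a_1 \mid g_0$, etc.) you find $U_1 = g_0\cdot\spn(a_1^2, a_1a_2, a_2^2)$ for some coprime $a_1, a_2$ and some common factor $g_0$. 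But your proposal stops at ``a case analysis shows'' and ``no accidental symmetry,'' without actually proving that the set of forms $f$ admitting such a facial subspace (i.e.\ forms expressible as a binary quartic $Q(a_1,a_2)$ up to a square factor $g_0^2$) has positive codimension in $\R[x,y]_{2d}$. That is precisely the content of the fact the paper simply invokes, and it is the step that needs a dimension count on the parameter space $(Q, a_1, a_2, g_0)$. Note also that this genericity statement should hold for every degree $d\geq 4$ where the proposition is nontrivial, and the borderline low-degree cases deserve an explicit check. As a minor remark, you correctly observed that $r_1 \geq 3$ already follows from Proposition~\ref{prop:UU_geq_2r-1} without invoking the absence of rank-one points, which slightly streamlines the paper's argument.
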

\begin{proof}
There is a chain $F_0 \subset F_1 \subset \dots \subset F_k := F$ of faces of $F$, where $\dim(F_i) = i$, $i = 0, \dots, k$. 
The corresponding chain of ranks of these faces has to be strictly increasing. 
Since $f$ is generic, $\Gram(f)$ does not contain neither points of rank $1$ nor positive-dimensional faces of rank $3$. 
Therefore, $\rk(F_1) \geq 4$ and $r = \rk(F) \geq k + 3$.
So the estimation we get in the real symmetric case is \[
k = \dim(F) \geq \choose{r+1}{2} - (2d+1) \geq \choose{k+4}{2} - (2d+1),
\]
which is equivalent to the inequality in the claim.
\end{proof}

\begin{prop}
Let $F$ be a face of $\Gram(f)$, $\rk(F) = r$. 
If there is a basis $p_1, \dots, p_r$ of $U = \scrU(F)$ such that $f = p_1^2 + \dots + p_r^2$ and the quadratic relations among the $p_i$'s only involve the squares $p_1^2, \dots, p_r^2$, then $F$ is polyhedral. 
\end{prop}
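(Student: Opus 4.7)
The plan is to transplant the proof of Proposition~\ref{prop:only_rel_among_herm_squares_means_polyhedral} into the real symmetric setting. I would write $\mathfrak{p} = (p_1, \ldots, p_r)^T$; since every $\theta \in F$ satisfies $\im(\theta) \subset U$, in the basis $(p_1, \ldots, p_r)$ it is represented by a symmetric $r \times r$ matrix $M$, and the constraint $\mu(\theta) = f$ becomes the polynomial identity $\mathfrak{p}^T M \mathfrak{p} = f$ in $A$.

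Next, the distinguished tensor $\theta_0 := \sum_{i=1}^r p_i \otimes p_i$ lies in $F$ by hypothesis and has Gram matrix $I_r$. Hence every Gram matrix of $f$ relative to $U$ has the form $I_r + B$ with $B$ symmetric and $\mathfrak{p}^T B \mathfrak{p} = 0$. Using the symmetry of $B$ to collect terms, this last equation reads
\[
\sum_{i=1}^r B_{ii}\, p_i^2 \,+\, 2 \sum_{1 \leq i < j \leq r} B_{ij}\, p_i p_j \,=\, 0,
\]
which is a quadratic relation among $p_1, \ldots, p_r$. By hypothesis every such relation only involves the squares $p_1^2, \ldots, p_r^2$, so the off-diagonal coefficients $B_{ij}$ ($i \neq j$) must all vanish. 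Thus $B$ is diagonal.

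Consequently, in the chosen basis, $F$ corresponds to the positive semidefinite matrices of the form $I_r + \mathrm{diag}(a_1, \ldots, a_r)$, i.e.\ to the affine image of the orthant $\{a \in \R^r : a_i \geq -1\}$ under the natural affine map into $\sfS_2 U$. This is cut out by a diagonal linear matrix inequality, and hence is polyhedral. There is no real obstacle here: the only point of care is the factor of $2$ on cross terms coming from the symmetry $p_i p_j = p_j p_i$ (the analogous step in the Hermitian proof treats $p_j \ol{p_k}$ and $p_k \ol{p_j}$ as independent generators of $U\ol{U}$, whereas here $p_i p_j = p_j p_i$ in $UU$). Once that bookkeeping is in place, the proof is a verbatim transcription of the Hermitian case.
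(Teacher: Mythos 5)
Your proof is correct and is essentially the paper's own argument: the paper states that the result follows from the Hermitian case (Proposition~\ref{prop:only_rel_among_herm_squares_means_polyhedral}) after noting $p_j = \ol{p_j}$ and that $UU$ is generated by $p_j p_k$ with $j \leq k$, and your proposal is precisely that transcription, with the factor-of-$2$ bookkeeping on cross terms spelled out explicitly.
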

\begin{proof}
Note that $p_j = \ol{p_j}$ for all $j$, and that $UU$ is generated by the products $p_j p_k$, where $1 \leq j \leq k \leq r$.
Aside from these modifications, the proof is the same as for Proposition \ref{prop:only_rel_among_herm_squares_means_polyhedral}.
\end{proof}

\begin{rem} \label{rem:diagonal_representation}
In the situation of the preceding proposition, any $\theta \in F$ has a representation $\theta = \sum_{i=1}^r (a_i p_i) \otimes (a_i p_i)$ with $a_i \in \R$. 
Indeed, if $D$ is the (diagonal) Gram matrix associated to $\theta$ with respect to the basis $p_1, \dots, p_r$ of $U$, then we can choose $a_i$ to be a square root of $D_{ii} \geq 0$ ($i = 1, \dots, r$).
\end{rem}

\begin{cor}[cf.~Cor.~\ref{cor:max_rk_min_dim_sum_polyhedral}]
Let $F \subset \Gram(f)$ be the supporting face of $k+1$ points $\theta_0, \dots, \theta_k \in \Gram(f)$. 
If $\dim(F) = k$ and $\rk(F) = \sum_i \rk(\theta_i)$, then $F$ is polyhedral. \qed
\end{cor}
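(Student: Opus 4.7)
I would mirror the proof of the Hermitian analog Corollary~\ref{cor:max_rk_min_dim_sum_polyhedral}, applying the preceding proposition in place of Proposition~\ref{prop:only_rel_among_herm_squares_means_polyhedral}. First, diagonalize each input as $\theta_i = \sum_{j=1}^{r_i} p_j^{(i)} \otimes p_j^{(i)}$ with $r_i = \rk(\theta_i)$ and $p_1^{(i)},\dots,p_{r_i}^{(i)}$ linearly independent. Because $\scrU(F) \supset \sum_i \im(\theta_i) = \spn\bigl(p_j^{(i)}\bigr)$ and the hypothesis $\rk(F) = \sum_i r_i$ forces the two sides to have equal dimension, the combined family $\bigl(p_j^{(i)} : 0 \leq i \leq k,\ 1 \leq j \leq r_i\bigr)$ is in fact a basis of $U := \scrU(F)$.

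Second, to literally fit the hypothesis ``$f = p_1^2 + \dots + p_r^2$'' of the preceding proposition, rescale by setting $q_j^{(i)} := (k+1)^{-1/2}\, p_j^{(i)}$. This is still a basis of $U$, and since $\mu(\theta_i) = f$ for every $i$ we get
\[
\sum_{i=0}^{k} \sum_{j=1}^{r_i} \bigl(q_j^{(i)}\bigr)^2 \;=\; \frac{1}{k+1}\sum_{i=0}^{k} f \;=\; f.
\]
The $k$ identities $\sum_j (q_j^{(0)})^2 = \sum_j (q_j^{(i)})^2$ for $i = 1, \dots, k$ are linearly independent relations among the products $q_j^{(i)} q_{j'}^{(i')}$ (the squares $(q_j^{(i)})^2$ with $i \geq 1$ appear only in the $i$-th identity), and they involve only squares.

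Third, argue that these $k$ relations already exhaust the relation space. By the dimension formula of \cite{Sch}, Prop.~3.6, the space of relations has dimension $\choose{r+1}{2} - \dim(UU) = \dim(F) = k$ with $r = \rk(F) = \sum_i r_i$; hence our $k$ relations span it. The hypothesis of the preceding proposition is then met, and it yields that $F$ is polyhedral. No step is genuinely delicate: the argument is pure bookkeeping parallel to the Hermitian case, and the only addition compared with Corollary~\ref{cor:max_rk_min_dim_sum_polyhedral} is the harmless rescaling by $(k+1)^{-1/2}$, which is needed precisely so that $f$ equals the sum of the squares of the basis elements handed to the preceding proposition.
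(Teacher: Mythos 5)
Your proof is correct and follows exactly the route the paper intends: the corollary is stated with \verb|\qed| precisely because the argument is the one you give, namely the real-symmetric transcription of the proof of Corollary~\ref{cor:max_rk_min_dim_sum_polyhedral}, feeding into the real analog of Proposition~\ref{prop:only_rel_among_herm_squares_means_polyhedral}. The one point you spell out that the paper silently elides is the rescaling by $(k+1)^{-1/2}$, which is indeed needed to literally satisfy the hypothesis $f = p_1^2 + \dots + p_r^2$ of that proposition (the unrescaled squares sum to $(k+1)f$); this is a welcome clarification, and the rest — linear independence of the combined family from the rank hypothesis, the $k$ obvious square-only relations, and the dimension count showing they exhaust the relation space — matches the paper's argument.
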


\begin{rem} \label{rem:max_rk_min_dim_binary_forms_real}
In particular, if $f \in \Sigma_{2d}$ is a binary form and if the supporting face $F$ of $k+1$ rank-two extreme points of $\Gram(f)$ has rank $2(k+1)$ (which is the maximal possible rank of this face) and dimension $k$ (which is the minimal possible dimension in this situation), then $F$ is polyhedral. 
\end{rem}

To have a short notation, we write $\Ex_2(f)$ for the set of rank-two extreme points of $\Gram(f)$.

\begin{prop} \label{prop:even_simplex}
In the situation of Remark \ref{rem:max_rk_min_dim_binary_forms_real}, $F$ is a simplex, all vertices of $F$ are rank-two tensors, and the rank of any $\theta \in F$ is even.
\end{prop}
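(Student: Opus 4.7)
The plan is to combine the preceding corollary (which already yields polyhedrality) with careful bookkeeping to pin down the vertex structure. Write each given extreme point as $\theta_i = p_{2i} \otimes p_{2i} + p_{2i+1} \otimes p_{2i+1}$; the hypothesis $\rk(F) = 2(k+1)$ forces $p_0, p_1, \dots, p_{2k+1}$ to be linearly independent, so they form a basis of $U := \scrU(F)$. Because each $\theta_i$ is a Gram tensor of $f$, the identities $p_{2i}^2 + p_{2i+1}^2 = p_0^2 + p_1^2$ hold for $i = 1, \dots, k$, providing $k$ linearly independent relations among the products $p_j p_l$ that involve only squares.

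The first substantive step is a dimension count. With $r = 2(k+1)$, the dimension formula $\dim(F) = \tfrac{r(r+1)}{2} - \dim(UU)$ combined with $\dim(F) = k$ forces $\dim(UU) = (k+1)(2k+3) - k$. Since $UU$ is generated by the $\choose{2k+3}{2} = (k+1)(2k+3)$ products $p_j p_l$ ($0 \leq j \leq l \leq 2k+1$), the space of linear relations among these generators has dimension exactly $k$. The $k$ relations displayed above are already linearly independent, so they span the whole relation space; in particular \emph{every} linear relation among the $p_j p_l$ involves only squares. Invoking the preceding proposition and Remark~\ref{rem:diagonal_representation}, every $\theta \in F$ therefore has the form $\theta = \sum_{j=0}^{2k+1} (a_j p_j) \otimes (a_j p_j)$ with $a_j \in \R$.

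Next I would solve for the admissible coefficients. Imposing $\mu(\theta) = f = p_0^2 + p_1^2$ and substituting $p_{2i+1}^2 = p_0^2 + p_1^2 - p_{2i}^2$ (for $i = 1, \dots, k$) into $\sum_j a_j^2 p_j^2$, the linear independence of $p_0^2, p_1^2, p_2^2, p_4^2, \dots, p_{2k}^2$ in $UU$ (which extend to a basis after the $k$ substitutions) yields $a_0^2 = a_1^2$, $a_{2i}^2 = a_{2i+1}^2$ for $i = 1, \dots, k$, together with $\sum_{i=0}^k a_{2i}^2 = 1$. Setting $c_i := a_{2i}^2 \geq 0$ rewrites this as $\theta = \sum_{i=0}^k c_i \theta_i$ with $\sum_i c_i = 1$, so $F = \conv(\theta_0, \dots, \theta_k)$. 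Since $\dim(F) = k$, the $\theta_i$ are affinely independent and $F$ is a $k$-simplex with rank-two vertices $\theta_0, \dots, \theta_k$. The linear independence of the $p_j$'s then gives $\rk(\theta) = 2 \cdot |\{i : c_i > 0\}|$, which is even.

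The main obstacle is the dimension bookkeeping in the first step: one must verify both that the $k$ listed relations among the $p_j p_l$ are linearly independent (immediate, as each involves a pair of squares absent from the others) and that the dimension count forces these to be \emph{all} the relations. After that, the remaining arguments are routine linear algebra.
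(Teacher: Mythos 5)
Your proposal is correct and follows essentially the same route as the paper: a dimension count shows the $k$ obvious square-relations exhaust all linear relations among the products $p_j p_l$, which via Remark~\ref{rem:diagonal_representation} forces every $\theta \in F$ into a diagonal form $\sum_j a_j^2 (p_j \otimes p_j)$, and comparing coefficients pins down $a_{2i}^2 = a_{2i+1}^2$, yielding $\theta = \sum_i c_i \theta_i$ with $c_i \geq 0$. The only cosmetic difference is that you conclude simplexhood directly from $F = \conv(\theta_0, \dots, \theta_k)$ plus $\dim(F) = k$, whereas the paper first identifies $\Ex(F) = \{\theta_0, \dots, \theta_k\}$ via supporting faces; both finishes are equally valid.
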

\begin{proof}
By assumption, $F$ is the supporting face of $k+1$ rank-two extreme points $\theta_0, \dots, \theta_k \in \Ex_2(f)$.
We write $\theta_i = q_{2i+1} \otimes q_{2i+1} + q_{2i+2} \otimes q_{2i+2}$ for $i = 0, \dots, k$. 
We know that $\mathcal{B} = (q_j : j = 1, \dots, 2(k+1))$ is a basis of $U = \scrU(F) \subset \R[x, y]_d$. 
Furthermore, the quadratic relations between the elements of $\mathcal{B}$ are generated by  \[
q_1^2 + q_2^2 = q_{2i+1}^2 + q_{2i+2}^2 \quad  (i = 1, \dots, k).
\]
So any quadratic relation is of the form
\[
\sum_{i=0}^k \lambda_i \left( q_{2i+1}^2 + q_{2i+2}^2  \right) = 0,
\]
where $\lambda_0, \lambda_1, \dots, \lambda_k \in \R$ with $\sum \lambda_i = 0$.
Let $\theta \in F$.
By Remark \ref{rem:diagonal_representation}, there is a representation $\theta = \sum_{j=1}^{2(k+1)} (a_j q_j) \otimes (a_j q_j)$ with $a_j \in \R$. 
This leads to the quadratic relation \[
q_1^2 + q_2^2 = f = \mu(\theta) = \sum_{i=0}^k a_{2i+1}^2 q_{2i+1}^2 + a_{2i+2}^2 q_{2i+2}^2. 
\]
Comparing this to the general appearance of a quadratic relation, we get $a_{2i+1}^2 = a_{2i+2}^2$ for all $i = 0, \dots, k$. 
In particular, $\rk(\theta) = 2 \cdot \vert \{i \in \{0, \dots, k\} : a_{2i+1} \neq 0 \} \vert$ is even. 
Besides, we can rewrite \[
\theta = \sum_{i=0}^k b_i^2 (q_{2i+1} \otimes q_{2i+1} + q_{2i+2} \otimes q_{2i+2}), 
\]
where $b_i = a_{2i+1}$.
Choose $l \in \{0, \dots, k\}$ with $b_l \neq 0$. 
Then \[
\scrU(\{\theta_l\}) = \spn(q_{2l+1}, q_{2l+2}) \subset \im(\theta) 
\]
and therefore $\theta_l \in \suppface(\theta)$. 
Hence, if $\theta \in \Ex(F)$ is an extreme point of $\Gram(f)$, then $\theta = \theta_l$. 
We conclude that $F = \conv(\Ex(F)) = \conv(\theta_0, \dots, \theta_k)$ is a simplex.
\end{proof}

\begin{rem}
For $d \leq 5$, Proposition \ref{prop:dim_bound_polyhedral_faces_real} implies $k \leq 1$, i.e.~there are no polyhedral faces bigger than edges. 
We are able to construct polynomials with two-dimensional polyhedral faces in their Gram spectrahedra as soon as $d \geq 9$ (see Theorem \ref{thm:construction_polyhedral_faces_real}). 
Such examples could in principle also exist for $d \in \{6, 7, 8\}$. 
But the rank of such a face $F$ would have to be (at most) $5$. 
This means that there have to be linear dependencies between the polynomials of which the different extreme points of $F$ are made up.
Furthermore, if $\Ex(F) \subset \Ex_2(f)$, then the face $F$ is not 'diagonalizable' as the following proposition shows.
\end{rem}

\begin{prop} \label{prop:diagonalizable_face_dim}
Let $f \in \R[x, y]_{2d}$ be a generic nonnegative binary form of degree $2d$. 
Let $F$ be a (polyhedral) face of dimension $k$ with $\Ex(F) \subset \Ex_2(f)$. 
If there is a basis $\mathcal{B}$ of $U = \scrU(F)$ such that all matrices associated to the Gram tensors in $F$ are diagonal with respect to $\mathcal{B}$, then $r = \rk(F) \geq 2(k+1)$ and $(k+1)^2 \leq d$. 
\end{prop}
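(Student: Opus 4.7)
The plan is to translate the diagonal hypothesis into a combinatorial description of $F$ as a polytope whose vertices encode rank-two sums-of-squares decompositions of $f$, and then to use the genericity input already invoked in the proof of Proposition \ref{prop:dim_bound_polyhedral_faces_real}, namely that $\Gram(f)$ contains no positive-dimensional face of rank $3$, to force a matching structure.

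Write $\mathcal{B} = (p_1, \dots, p_r)$. Sending a diagonal Gram tensor $\theta = \sum_i d_i\,(p_i \otimes p_i) \in F$ to the diagonal tuple $(d_1, \dots, d_r)$ identifies $F$ with the polytope
\[
W = \left\{ (d_1, \dots, d_r) \in \R_{\geq 0}^r : \sum_{i=1}^r d_i\, p_i^2 = f \right\}.
\]
Since all extreme points of $F$ have rank two, every vertex of $W$ has exactly two positive coordinates, and so corresponds to an unordered pair $\{i, j\} \subset \{1, \dots, r\}$ together with a decomposition $f = \alpha p_i^2 + \beta p_j^2$ for some $\alpha, \beta > 0$. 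Let $G$ be the graph on $\{1, \dots, r\}$ whose edges are the pairs arising this way. Since $W$ is $k$-dimensional, it has at least $k+1$ vertices, so $|E(G)| \geq k+1$.

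The main step is to show that $G$ is a matching. Assume that two distinct edges $\{i,j\}$ and $\{i,\ell\}$ share the vertex $i$, with corresponding extreme points
\[
\theta_1 = \alpha\,(p_i \otimes p_i) + \beta\,(p_j \otimes p_j), \quad \theta_2 = \alpha'\,(p_i \otimes p_i) + \beta'\,(p_\ell \otimes p_\ell),
\]
all coefficients being positive. Then the midpoint $\eta = \frac{1}{2}(\theta_1 + \theta_2) \in F$ has positive diagonal entries in precisely the positions $i, j, \ell$, so $\im(\eta) = \spn(p_i, p_j, p_\ell)$ is three-dimensional. Consequently $U' := \spn(p_i, p_j, p_\ell)$ is a face subspace of $\Gram(f)$, and the face $\scrF(U')$ has rank $3$; but $\scrF(U')$ contains the three distinct collinear points $\theta_1, \eta, \theta_2$, hence $\dim \scrF(U') \geq 1$. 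This contradicts the fact (invoked in the proof of Proposition \ref{prop:dim_bound_polyhedral_faces_real}) that for generic $f$ the spectrahedron $\Gram(f)$ admits no positive-dimensional rank-$3$ face. Hence $G$ is a matching.

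Since $G$ is a matching on $\{1, \dots, r\}$ with at least $k+1$ edges, its vertex set has $r \geq 2(k+1)$ elements, which is the first conclusion. For the second, the dimension formula $\dim \scrF(U) = \choose{r+1}{2} - \dim(UU)$ gives
\[
\dim(UU) = \choose{r+1}{2} - k \;\geq\; \choose{2k+3}{2} - k \;=\; 2(k+1)^2 + 1,
\]
while $UU \subset \R[x, y]_{2d}$ forces $\dim(UU) \leq 2d+1$; combining these yields $(k+1)^2 \leq d$. The central obstacle is the rank-$3$ step: once the matching property is secured through the contradiction with the genericity fact from Proposition \ref{prop:dim_bound_polyhedral_faces_real}, the rest is forced mechanically by the dimension formula.
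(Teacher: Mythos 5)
Your proposal is correct and takes essentially the same approach as the paper: both identify each rank-two extreme point with the two-element subset of basis vectors it uses, invoke the genericity fact that $\Gram(f)$ has no positive-dimensional rank-$3$ face to rule out any two such pairs sharing an element, conclude $r \geq 2(k+1)$, and plug into the dimension formula. The only cosmetic difference is that you present the overlap argument directly (showing the pairs form a matching) whereas the paper argues by contraposition (assume $r < 2(k+1)$, pigeonhole forces an overlap), but the content is identical.
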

\begin{proof}
Let $\mathcal{B} = (p_1, \dots, p_r)$ be such a basis of $U$. 
Since $F$ is a $k$-dimensional polytope and $\Ex(F) \subset \Ex_2(f)$, we can choose $k+1$ distinct extreme points $\theta_0, \dots, \theta_k \in \Ex(F)$ of rank two. 
For $i = 0, \dots, k$ we consider the subspaces $U_i = \scrU(\{\theta_i\}) \subset U$. 
By assumption, every $\theta \in F$ has a representation $\sum_{i=1}^r (a_i p_i) \otimes (a_i p_i)$ with $a_i \in \R$. 
This means that every facial subspace (for the given spectrahedron $F = \Gram_U(f)$) of $U$ is generated by a subset of the $p_i$'s. 
In particular, each $U_i$ has a basis consisting of exactly two elements out of $p_1, \dots, p_r$.
Assume that $r = \rk(F) < 2(k+1)$. 
Then there are $j \in \{1, \dots, r\}$ and $i_1 \neq i_2$ such that $p_j \in U_{i_1} \cap U_{i_2} \neq \{0\}$.
But then \[
2 < \rk(\theta_{i_1} + \theta_{i_2}) = \dim(U_{i_1} + U_{i_2}) \leq 3.
\]
This would mean that $\theta_{i_1}$ and $\theta_{i_2}$ are contained in a positive-dimensional face of rank three.
However, there is no such face in $\Gram(f)$ because $f$ is generic. \\
Finally, 
\[
k = \dim(F) \geq \choose{r+1}{2} - (2d+1) \geq \choose{2(k+1)+1}{2} - (2d+1),
\]
which can be simplified to $(k+1)^2 \leq d$.
\end{proof}

We will use the construction from the Hermitian case as a foundation for our construction in the real symmetric case. 
Since we aim for $k$-simpleces with extreme points of rank two (instead of rank one) we have to allow for higher degree polynomials (cf.~Proposition \ref{prop:diagonalizable_face_dim}), meaning that we will have to introduce another cofactor to make up for the difference.  
Besides, we have to make sure that not only $U \ol{U}$ is big (as needed in the Hermitian case), but also $UU$. 
This means that we have to choose the cofactor slightly more carefully.

\begin{lem} [cf.~Prop.~\ref{prop:construction_special_basis}]
\label{lem:careful_construction_special_basis} 
Let $U \subset \C[x]_{\leq d}$ be a linear subspace of dimension $\dim(U) = k \leq d$. 
Then there are $\lambda_1, \dots, \lambda_k \in \C$ such that the following holds: 
Whenever $p \in U$ and $p(\lambda_j) = 0$ for all $j$ \emph{or}  $p(\ol{\lambda_j}) = 0$ for all $j$, then $p = 0$.
\end{lem}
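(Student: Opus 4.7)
The plan is to reduce the lemma to Proposition \ref{prop:construction_special_basis}, together with the more precise statement in Remark \ref{rem:choose_scalars_generically}, by simultaneously imposing the condition for $\lambda = (\lambda_1, \dots, \lambda_k)$ and for its complex conjugate $\ol{\lambda} = (\ol{\lambda_1}, \dots, \ol{\lambda_k})$; the key is that both of the resulting ``admissible sets'' are Euclidean open and dense in $\C^k$, and so their intersection is nonempty.

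First, following Remark \ref{rem:choose_scalars_generically}, I would let
\[
\Omega \;:=\; \left\{ \lambda \in \C^k : \prod_{j=1}^k q_j(\lambda_1, \dots, \lambda_j) \neq 0 \right\} \;\subset\; \C^k,
\]
where the $q_j \in \C[x_1, \dots, x_j]$ are as constructed in that remark. Then $\Omega$ is nonempty Zariski-open (hence also Euclidean open and dense) in $\C^k$, and Proposition \ref{prop:construction_special_basis} guarantees that every $\lambda \in \Omega$ has the property that $p \in U$ with $p(\lambda_j)=0$ for all $j$ forces $p = 0$. The second condition in the lemma, namely that $p(\ol{\lambda_j})=0$ for all $j$ forces $p=0$, is then equivalent to asking $\ol{\lambda} \in \Omega$, i.e.~$\lambda \in \sigma(\Omega)$, where $\sigma \colon \C^k \to \C^k$, $\lambda \mapsto \ol{\lambda}$, is the standard antiholomorphic involution.

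The remaining step is to intersect $\Omega$ with $\sigma(\Omega)$. Since $\sigma$ is a homeomorphism of $\C^k \cong \R^{2k}$ in the Euclidean topology, and $\Omega$ is Euclidean open and dense, the same is true of $\sigma(\Omega)$. Equivalently, if we write coordinates $\lambda_j = a_j + i b_j$, then both $\lambda \mapsto \prod_j q_j(\lambda)$ and $\lambda \mapsto \prod_j q_j(\ol{\lambda})$ are nonzero polynomials in the real variables $a_1,\dots,a_k,b_1,\dots,b_k$, so their joint nonvanishing locus is the complement of a proper real algebraic subset of $\R^{2k}$ and is therefore nonempty (and dense). Choosing any $\lambda$ in $\Omega \cap \sigma(\Omega)$ yields a tuple satisfying both required implications. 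I do not foresee any real obstacle: the argument is a density/Baire-type observation layered on top of the univariate construction already developed in Proposition \ref{prop:construction_special_basis} and Remark \ref{rem:choose_scalars_generically}.
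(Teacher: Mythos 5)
Your proof is correct, but it takes a genuinely different route from the paper. The paper's proof of Lemma \ref{lem:careful_construction_special_basis} runs the inductive construction of Proposition \ref{prop:construction_special_basis} in parallel with \emph{two} bases $q_j^{(l)}$ and $\tilde{q}_j^{(l)}$ of $U$, at each stage choosing $\lambda_l$ outside both $\mathcal{V}(q_l^{(l)})$ and $\mathcal{V}(\ol{\tilde{q}_l^{(l)}})$, so that the resulting tuple works for the untwisted points and, using the second basis, also for the conjugate points. You instead take the genericity established in Remark \ref{rem:choose_scalars_generically} as the primitive: the set $\Omega$ of admissible scalars is nonempty Zariski-open, hence Euclidean open and dense in $\C^k$, and since $\sigma \colon \lambda \mapsto \ol{\lambda}$ is a homeomorphism of $\C^k$, $\sigma(\Omega)$ is open and dense too, so $\Omega \cap \sigma(\Omega) \neq \emptyset$. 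One small wording point: $\ol{\lambda} \in \Omega$ is \emph{sufficient} (not equivalent) for the implication ``$p(\ol{\lambda_j})=0$ for all $j$ $\Rightarrow$ $p=0$''; but sufficiency is all your argument needs, so this is harmless. The paper's approach is more explicitly constructive and self-contained, relying only on Proposition \ref{prop:construction_special_basis} itself; yours is shorter and more conceptual, but depends on the stronger density statement of Remark \ref{rem:choose_scalars_generically} as a black box. Both are valid.
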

\begin{proof}
We start with \emph{two} arbitrary bases $q_1^{(1)}, \dots, q_k^{(1)}$ and $\tilde{q}_1^{(1)}, \dots, \tilde{q}_k^{(1)}$ of $U$.
If $l \in \{1, \dots, k-1\}$, we choose \[
\lambda_l \in \C \setminus \left( \mathcal{V} \left(q_l^{(l)} \right) \cup \mathcal{V} \left( \ol{\tilde{q}_l^{(l)}} \right) \right). 
\] 
Then set  
\begin{IEEEeqnarray*}{rCll}
q_j^{(l+1)} & := & q_l^{(l)}(\lambda_l) \cdot q_j^{(l)} - q_j^{(l)}(\lambda_l) \cdot q_l^{(l)} & \quad \text{and} \\
\tilde{q}_j^{(l+1)} & := & \tilde{q}_l^{(l)}(\ol{\lambda_l}) \cdot \tilde{q}_j^{(l)} - \tilde{q}_j^{(l)}(\ol{\lambda_l}) \cdot \tilde{q}_l^{(l)}, 
& \quad j = l+1, \dots, k.
\end{IEEEeqnarray*}
Among other things, this guarantees that $q_j^{(l+1)}(\lambda_l) = 0$ and $\tilde{q}_j^{(l+1)}(\ol{\lambda_l}) = 0$ for all $j \geq l+1$, whereas $q_l^{(l)}(\lambda_l) \neq 0$ and $\tilde{q}_l^{(l)}(\ol{\lambda_l}) \neq 0$.
Everything else follows as in Proposition \ref{prop:construction_special_basis}. 
Note that if $p \in U$ vanishes in $\lambda_1, \dots, \lambda_k$ we can use the first basis to show that $p = 0$, while in the case of $p$ vanishing in $\ol{\lambda_1}, \dots, \ol{\lambda_k}$ we can use the second basis to come to the same conclusion.  
\end{proof}

\begin{lem} \label{lem:herm_construction_with_quadr_indep}
Let $k \in \N$ and $d = \choose{k+1}{2}$. 
The binary form $f \in \R[x, y]_{2d}$ in Theorem \ref{thm:construction_polyhedral_faces_hermitian} can be constructed in such a way that for the subspace $U \subset \C[x, y]_d$ from the proof we have not only $U \ol{U} = \C[x, y]_{2d}$ but also $\dim_\C(UU) = \choose{k+2}{2}$.
\end{lem}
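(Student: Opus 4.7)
The plan is to proceed by induction on $k$, refining the inductive construction of Theorem \ref{thm:construction_polyhedral_faces_hermitian} so as to control $\dim_\C(UU)$ in parallel with $\dim_\C(U\ol{U})$. The base case $k = 1$ is immediate: $d = 1$ forces $U = \C[x,y]_1$, so $UU = \C[x,y]_2$ has dimension $3 = \choose{3}{2}$.

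For the inductive step, start from $g \in \R[x,y]_{2d'}$ and $U' \subset \C[x,y]_{d'}$ produced by the inductive application of Theorem \ref{thm:construction_polyhedral_faces_hermitian} for $k-1$, assuming by the inductive hypothesis that $\dim_\C(U'U') = \choose{k+1}{2}$. Perform the construction of Theorem \ref{thm:construction_polyhedral_faces_hermitian} with two strengthenings in the choice of $\beta_1, \dots, \beta_k \in \C$: \emph{(a)} use Lemma \ref{lem:careful_construction_special_basis} in place of Proposition \ref{prop:construction_special_basis}, so that no nonzero $p \in U'$ is divisible by $s$ \emph{or} by $\ol{s}$; and \emph{(b)} additionally require $t(\ol{\beta_j}, 1) \neq 0$ for every $j$, i.e.~$\gcd(t, \ol{s}) = 1$. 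Both are nonempty Zariski-open conditions on $(\beta_1, \dots, \beta_k)$ and remain compatible with the distinctness conditions already imposed in Theorem \ref{thm:construction_polyhedral_faces_hermitian}.

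Using $U = \C \cdot st \oplus \ol{s}\, U'$ from the proof of Theorem \ref{thm:construction_polyhedral_faces_hermitian}, expand
\[
UU \;=\; \C \cdot s^2 t^2 \;+\; s\ol{s}\, t \cdot U' \;+\; \ol{s}^2 \cdot (U'U').
\]
Multiplication in the domain $\C[x,y]$ is injective, so the three summands have $\C$-dimensions $1$, $k$, and $\choose{k+1}{2}$ respectively (the last by the inductive hypothesis), summing to the general upper bound $\choose{k+2}{2}$ for $\dim_\C(UU)$. It thus suffices to show this sum is direct. Suppose $\alpha s^2 t^2 + s\ol{s}\, tu + \ol{s}^2 q = 0$ with $\alpha \in \C$, $u \in U'$, $q \in U'U'$. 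Reducing modulo $s$ (using $\gcd(s, \ol{s}) = 1$) gives $s \mid q$; writing $q = sq_1$ and canceling $s$ yields $\alpha s t^2 + \ol{s}\, tu + \ol{s}^2 q_1 = 0$. Reducing modulo $\ol{s}$ forces $\ol{s} \mid \alpha t^2$; since $\ol{s}$ is squarefree and $\ol{s} \nmid t$ by (a), this forces $\alpha = 0$. Canceling $\ol{s}$ leaves $tu = -\ol{s}\, q_1$, so $\ol{s} \mid tu$; by (b) no linear factor of $\ol{s}$ divides $t$, hence each divides $u$, so $\ol{s} \mid u$, and a second application of (a) gives $u = 0$. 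Finally $q_1 = 0$ and $q = 0$.

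The main obstacle is identifying precisely the coprimality conditions on $s, \ol{s}, t$ that make the divisibility chain close up; condition (b) in particular is essential for passing from $\ol{s} \mid tu$ to $u = 0$ and is strictly stronger than the $\ol{s} \nmid t$ provided by (a).
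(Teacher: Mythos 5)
Your proof is correct and follows essentially the same route as the paper: keep $UU = \C \cdot s^2 t^2 + s\ol{s}\,t U' + \ol{s}^2 U'U'$ as the target decomposition, use Lemma \ref{lem:careful_construction_special_basis} in place of Proposition \ref{prop:construction_special_basis}, reduce modulo $s$ and $\ol{s}$ to make the sum direct, and conclude with the dimension count $1 + k + \binom{k+1}{2} = \binom{k+2}{2}$. One small remark: your condition (b), $\gcd(t,\ol{s})=1$, is not an extra requirement that needs to be imposed — it is automatic from the construction in Theorem \ref{thm:construction_polyhedral_faces_hermitian}, since $t = p_1$ divides $g$ and so its roots lie among $\alpha_j, \ol{\alpha_j}$, which are disjoint from $\ol{\beta_1}, \dots, \ol{\beta_k}$ by the distinctness condition already built in. The paper uses this fact silently; you spell it out, which is a clarity gain but not a genuine strengthening of the hypotheses.
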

\begin{proof}
This is clear for $k = 1$. 
Let $k \geq 2$ and $d' = \choose{k}{2}$. 
By induction, we assume that we have a facial subspace $U' \subset \C[x, y]_{d'}$ for the Hermitian Gram spectrahedron of $g \in \R[x, y]_{2d'}$ such that $\dim_\C(U') = k$ and $U' \ol{U'} = \C[x, y]_{2d'}$ as in the proof of \ref{thm:construction_polyhedral_faces_hermitian}, 
and in addition $\dim_\C(U'U') = \choose{k+1}{2}$, i.e.~$U'$ is \emph{quadratically independent}.
Given the roots $\alpha_j$, $\ol{\alpha_j} \in \C$ of $g(x, 1)$, by Lemma \ref{lem:careful_construction_special_basis}  we find $\beta_1, \dots, \beta_k \in \C$ such that   
\[ 
\lvert \{ \alpha_j, \ol{\alpha_j} : j = 1, \dots, d' \} \cup \{ \beta_j, \ol{\beta_j} : j = 1, \dots, k \} \rvert = 2d,
\]
and that whenever $p \in U'$ vanishes in $(\beta_1 : 1), \dots, (\beta_k : 1)$ or in $(\ol{\beta_1} : 1), \dots, (\ol{\beta_k} : 1)$ then $p = 0$. 
We define $s$, $t$ and $f$ exactly as in the proof of \ref{thm:construction_polyhedral_faces_hermitian}, as well as \[
U = \C \cdot st \oplus \ol{s}U'.
\]
We have to show that the sum \[
UU = \C \cdot s^2 t^2 + s \ol{s} t U' + \ol{s}^2 U'U' 
\]
is direct. 
Then the induction hypothesis implies \[
\dim_\C(UU) = 1 + \dim_\C(U') + \dim_\C(U'U') = 1 + k + \choose{k+1}{2} = \choose{k+2}{2}.
\]
Suppose we have $s \ol{s} t u = \ol{s}^2 w$ for some $u \in U'$ and $w \in U'U'$. 
Then $stu = \ol{s}w$, so $\ol{s}$ divides $u \in U'$. 
By construction, $u = 0$. 
Hence, $s \ol{s} t U' \cap \ol{s}^2 U'U' = \{0\}$. 
Finally, $s^2 t^2 \notin s \ol{s} t U' + \ol{s}^2 U'U'$ is clear since $\ol{s}$ does not divide $s^2 t^2$.
\end{proof}

\begin{thm} \label{thm:construction_polyhedral_faces_real}
Let $k \in \N$ and $d = (k+1)^2$. 
Then there exists a positive binary form $f \in \R[x, y]_{2d}$ with distinct roots such that $\Gram(f)$ contains a simplex face $F$ 
with $(\rk(F), \dim(F)) = (2(k+1), k)$ and $\Ex(F) \subset \Ex_2(f)$.
\end{thm}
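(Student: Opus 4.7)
My plan is to bootstrap from the Hermitian construction. By Lemma \ref{lem:herm_construction_with_quadr_indep}, setting $d' := \choose{k+1}{2}$, I obtain a positive form $g \in \R[x, y]_{2d'}$ with distinct roots together with a facial subspace $U = \spn_\C(q_0, \ldots, q_k) \subset \C[x, y]_{d'}$ satisfying $g = q_j \ol{q_j}$ for all $j$, $U \ol{U} = \C[x, y]_{2d'}$, and $\dim_\C(UU) = \choose{k+2}{2}$. Setting $e := \choose{k+2}{2}$, the numerical identity $d' + e = (k+1)^2 = d$ tells me that one additional cofactor of degree $2e$ is needed. I will pick $\tilde s \in \C[x, y]_e$, set $f := \tilde s \ol{\tilde s} \cdot g \in \R[x, y]_{2d}$ and $h_j := \tilde s q_j \in \C[x, y]_d$, and decompose $h_j = u_j + i v_j$ with $u_j, v_j \in \R[x, y]_d$. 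The identity $f = h_j \ol{h_j} = u_j^2 + v_j^2$ yields rank-two symmetric Gram tensors $\theta_j := u_j \otimes u_j + v_j \otimes v_j \in \Gram(f)$, and I take $F := \suppface(\theta_0, \ldots, \theta_k)$.

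The key technical task is to choose $\tilde s$ so that $\rk(F) = 2(k+1)$ and $\dim(F) = k$; once achieved, Remark \ref{rem:max_rk_min_dim_binary_forms_real} together with Proposition \ref{prop:even_simplex} will automatically give that $F$ is a simplex with rank-two vertices. I take $\tilde s = \prod_{j=1}^e (x - \mu_j y)$ with scalars $\mu_1, \ldots, \mu_e \in \C$ chosen generically (as in Remark \ref{rem:choose_scalars_generically}) so that (i) the $2e$ numbers $\mu_j, \ol{\mu_j}$ are pairwise distinct and disjoint from the roots of $g$, ensuring $\gcd(\tilde s, \ol{\tilde s}) = 1$ and that $f$ has $2d$ distinct roots, and (ii) no nonzero element of $\ol{U} \ol{U}$ vanishes at all the points $(\mu_j : 1)$. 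Since $\dim_\C(\ol{U} \ol{U}) = e$, Proposition \ref{prop:construction_special_basis} guarantees (ii) is attainable; equivalently, by conjugation, $\ol{\tilde s}$ divides no nonzero element of $UU$.

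The rank claim then follows by an argument parallel to the Hermitian case: any $\C$-linear relation among $h_0, \ldots, h_k, \ol{h_0}, \ldots, \ol{h_k}$ collapses to $\tilde s p = \ol{\tilde s} \ol{q}$ for some $p, q \in U$, and coprimality of $\tilde s, \ol{\tilde s}$ combined with $\deg \ol{\tilde s} = e > d' \geq \deg p$ forces $p = q = 0$. The dimension claim is the main obstacle. Complexifying $W := \spn_\R(u_0, v_0, \ldots, u_k, v_k)$, I obtain $W_\C = \tilde s U + \ol{\tilde s} \ol{U}$ and hence
\[
W_\C W_\C \;=\; \tilde s^2 \cdot UU \;+\; \tilde s \ol{\tilde s} \cdot \C[x, y]_{2d'} \;+\; \ol{\tilde s}^2 \cdot \ol{U} \ol{U}.
\]
Showing this sum is direct is the heart of the proof: any element of $\ol{\tilde s}^2 \ol{U} \ol{U} \cap \tilde s \ol{\tilde s} \C[x, y]_{2d'}$ yields $\ol{\tilde s} w = \tilde s q$, and coprimality forces $\tilde s \mid w$ for some $w \in \ol{U} \ol{U}$, hence $w = 0$ by (ii). Similarly, if $\tilde s^2 v = \tilde s \ol{\tilde s} q + \ol{\tilde s}^2 w$ with $v \in UU$, reduction modulo $\ol{\tilde s}$ gives $\ol{\tilde s} \mid v$, so $v = 0$ and the previous case finishes the argument. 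A dimension count then yields $\dim_\C(W_\C W_\C) = 2 \choose{k+2}{2} + (2d'+1) = 2d+1$, so $WW = \R[x, y]_{2d}$ and $\dim(F) = \choose{2(k+1)+1}{2} - (2d+1) = k$ as required.
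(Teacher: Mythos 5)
Your proposal reproduces the paper's own argument: both take the Hermitian simplex face from Theorem \ref{thm:construction_polyhedral_faces_hermitian}, upgrade it via Lemma \ref{lem:herm_construction_with_quadr_indep} so that $U_0 U_0$ has full dimension $\binom{k+2}{2}$, multiply by a cofactor $q$ (your $\tilde s$) of degree $\binom{k+2}{2}$ chosen so that $q\nmid$ any nonzero element of $\ol{U_0U_0}$, split the rank-one Hermitian tensors $qp_j\otimes\ol{qp_j}$ into rank-two symmetric tensors via real and imaginary parts, and verify rank $2(k+1)$ and dimension $k$ through the direct-sum decompositions $U_\C = qU_0\oplus\ol{qU_0}$ and $U_\C U_\C = q^2U_0U_0\oplus\ol{q^2U_0U_0}\oplus q\ol q\,\C[x,y]_{2d_0}$, then invoke Remark \ref{rem:max_rk_min_dim_binary_forms_real} and Proposition \ref{prop:even_simplex}. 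The only cosmetic difference is the order in which you establish the directness of the three summands; the substance is identical.
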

\begin{proof}
The proof heavily relies on the construction in the Hermitian case. 
Let $d_0 = \choose{k+1}{2}$ and let $g \in \R[x, y]_{2d_0}$ be a positive binary form with distinct roots such that $\H^\plus(g)$ contains a simplex face $F_0$ with the following properties (see Theorem \ref{thm:construction_polyhedral_faces_hermitian} and Lemma \ref{lem:herm_construction_with_quadr_indep}):
\begin{itemize}
\item $(\rk(F_0), \dim(F_0)) = (k+1, k)$,
\item $\Ex(F_0) \subset \Ex_1(\H^\plus(g))$,
\item the subspace $U_0 = \scrU(F_0)\subset \C[x, y]_{d_0}$ is quadratically independent, \\
 i.e.~$\dim_\C(U_0 U_0) = \choose{\dim(U_0)+1}{2} = \choose{k+2}{2}$.
\end{itemize} 
Write $\Ex(F_0) = \{p_j \otimes \ol{p_j} : j = 0, \dots, k\}$, so that $U_0 = \spn_\C(p_0, \dots, p_k)$.
Due to Proposition \ref{prop:construction_special_basis} we can choose some $q \in \C[x, y]$ of degree $d - d_0 = (k+1)^2 - \choose{k+1}{2} = \choose{k+2}{2} = \dim_\C(U_0 U_0)$ such that $f := q \ol{q} \cdot g \in \R[x, y]_{2d}$ has distinct roots and $q$ does not divide any nonzero element of $\ol{U_0 U_0}$. 
Then $q p_j \otimes \ol{q p_j}$ are rank-one tensors in the Hermitian Gram spectrahedron of $f$. 
So \[
\theta_j := \operatorname{Re}(q p_j) \otimes \operatorname{Re}(q p_j) +  \operatorname{Im}(q p_j) \otimes \operatorname{Im}(q p_j) \in \Ex_2(f)
\]
for all $j = 0, \dots, k$. 
Consider $F = \suppface( \theta_j : j = 0, \dots, k) \subset \Gram(f)$. 
For the facial subspace $U := \scrU(F) \subset \R[x, y]_d$ of $F$ we have \[
U_\C = \spn_\C(q p_j, \ol{q p_j} : j = 0, \dots, k) = q U_0 + \ol{q U_0} 
\]
Since $q$ and $\ol{q}$ are coprime and $U_0 \subset \C[x, y]_{d_0}$ with $d_0 = \choose{k+1}{2} < \choose{k+2}{2} = \deg(q)$, we see that \[
U_\C =  q U_0 \oplus \ol{q U_0} \quad \text{and} \quad \rk(F) = \dim_\R(U) = \dim_\C(U_\C) = 2 \dim_\C(U_0) = 2(k+1).
\]
Next, we have to show that the sum \[
U_\C U_\C = q^2 U_0 U_0 + \ol{q^2 U_0 U_0} + q \ol{q} \C[x, y]_{2d_0}
\]
is a direct sum. 
For $q^2 U_0 U_0 \cap \ol{q^2 U_0 U_0} = \{0\}$ we can use the same argument as above. 
Now let $q^2 u + \ol{q}^2 \ol{v} = q \ol{q} h$ for some $u, v \in U_0 U_0$ and $h \in \C[x, y]_{2d_0}$. 
Then $q(\ol{q}h - qu) = \ol{q}^2 \ol{v}$, and therefore $q$ divides $\ol{v} \in \ol{U_0 U_0}$. 
By the choice of $q$, $\ol{v} = 0$. 
Analogously, $u = 0$. 
Consequently, \[
\dim_\C(U_\C U_\C) = 2 \dim_\C(U_0 U_0) + 2d_0 + 1 = 2(d-d_0) + 2 d_0 + 1 = 2d+1. 
\]
This means that $\dim(F) = k$.
Finally, Remark \ref{rem:max_rk_min_dim_binary_forms_real} and Proposition \ref{prop:even_simplex} imply that $F$ is a simplex whose extreme points have rank two.
\end{proof}

\begin{thm}
Let $k \in \N$ and $d \geq (k+1)^2$. 
The Gram spectrahedron of a generic nonnegative binary form $f \in \R[x, y]_{2d}$ contains a simplex face $F$ with $(\rk(F), \dim(F)) = (2(k+1), k)$ and $\Ex(F) \subset \Ex_2(f)$.
\end{thm}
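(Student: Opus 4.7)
The plan is to follow Theorem \ref{thm:generic_herm_case} essentially verbatim, with the rank-two/real simplex construction of Theorem \ref{thm:construction_polyhedral_faces_real} replacing the rank-one/Hermitian construction used there. I define $Q_{2d}$ to be the set of $f \in \R[x,y]_{2d}$ such that $f \in \interior(\Sigma_{2d})$ has distinct roots and $\Gram(f)$ contains a simplex face $F$ with $(\rk(F), \dim(F)) = (2(k+1), k)$ and $\Ex(F) \subset \Ex_2(f)$. As for $P_{2d}$ in the Hermitian case, $Q_{2d}$ is the projection of a semialgebraic set of tuples $(f, p_1, \dots, p_{2(k+1)}) \in \R[x,y]_{2d} \times \R[x,y]_d^{2(k+1)}$ encoding the face data via Remark \ref{rem:max_rk_min_dim_binary_forms_real}, so $Q_{2d}$ is itself semialgebraic. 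By Proposition 2.8.13 in \cite{BCR} it then suffices to show that $Q_{2d}$ is dense in $\Sigma_{2d}$.

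Next I reduce to the base case $d = (k+1)^2$. For $d > (k+1)^2$, the construction of Theorem \ref{thm:construction_polyhedral_faces_real} applies almost verbatim if one allows a longer complex cofactor: setting $d_0 = \choose{k+1}{2}$, the proof there only used $\deg(q) \geq \dim_\C(U_0 U_0) = \choose{k+2}{2}$, so any $q \in \C[x,y]_{d-d_0}$ satisfying the relevant non-divisibility condition works. Therefore, given $h \in \interior(\Sigma_{2d})$ in an open neighborhood $V$, I factor $h = g \cdot q \ol{q}$ with $g \in \interior(\Sigma_{2d_0})$ and $q \in \C[x,y]_{d-d_0}$, and exploit continuity of the multiplication map $(g, q) \mapsto g \cdot q \ol{q}$ to obtain open neighborhoods $G \ni g$ and $Q \ni q$ with $\{ g' \cdot q' \ol{q'} : g' \in G, q' \in Q \} \subset V$.

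For the final perturbation, I pick $\tilde{g} \in G$ lying in $P_{2d_0}$ (dense by Theorem \ref{thm:generic_herm_case}) and additionally satisfying the quadratic-independence hypothesis of Lemma \ref{lem:herm_construction_with_quadr_indep}. Then I choose $\tilde{q} \in Q$ close to $q$ such that $\tilde{q}$ does not divide any nonzero element of $\ol{U_0 U_0}$ (where $U_0 \subset \C[x,y]_{d_0}$ is the facial subspace of the simplex face of $\H^\plus(\tilde{g})$) and such that $\tilde{f} := \tilde{g} \cdot \tilde{q} \ol{\tilde{q}}$ has distinct roots. By Remark \ref{rem:choose_scalars_generically}, these are Zariski-open conditions on $\tilde{q}$, generically satisfied. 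Theorem \ref{thm:construction_polyhedral_faces_real} then exhibits a simplex face of the required shape in $\Gram(\tilde{f})$, so $\tilde{f} \in Q_{2d} \cap V$, completing the density argument.

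The principal obstacle I foresee is verifying that the quadratic-independence condition of Lemma \ref{lem:herm_construction_with_quadr_indep} holds generically within $P_{2d_0}$, because the definition of $P_{2d_0}$ only encodes the existence of a simplex face with rank-one extreme points and does not a priori constrain $U_0 U_0$. I would handle this by replacing $P_{2d_0}$ with a refinement $P_{2d_0}' \subset P_{2d_0}$ cutting in the additional semialgebraic condition $\dim_\C(U_0 U_0) = \choose{k+2}{2}$, and re-running the inductive density proof of Theorem \ref{thm:generic_herm_case} with $P_{2d_0}'$ in place of $P_{2d_0}$; the extra condition is again open and dense in the parameter spaces appearing in the induction, so the argument should carry over without substantive modification.
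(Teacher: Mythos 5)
Your proposal is correct and follows the same route the paper sketches: show that the set $Q_{2d}$ is semialgebraic, then establish density in $\Sigma_{2d}$ by factoring $h$, perturbing one factor into $P_{2d_0}$ (with $d_0 = \binom{k+1}{2}$) using the Hermitian density theorem, and perturbing the cofactor into the Zariski-open set of ``good'' complex cofactors so that the construction of Theorem \ref{thm:construction_polyhedral_faces_real} applies; your parameterization of the cofactor by $q \in \C[x,y]_{d-d_0}$ rather than by $q\ol q \in \Sigma_{2e}$ is an inessential variation.

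Your last paragraph in fact closes a genuine gap the paper's sketch leaves implicit: $P_{2d_0}$ as defined only records the existence of a rank-one simplex face, not quadratic independence of $U_0$, yet Theorem \ref{thm:construction_polyhedral_faces_real} needs $\dim_\C(U_0U_0) = \binom{k+2}{2}$. Your fix --- refine to $P_{2d_0}'$ by imposing the quadratic-independence condition and rerun the induction of Theorem \ref{thm:generic_herm_case} using Lemma \ref{lem:herm_construction_with_quadr_indep} (and Lemma \ref{lem:careful_construction_special_basis}) in place of Theorem \ref{thm:construction_polyhedral_faces_hermitian} (and Proposition \ref{prop:construction_special_basis}) --- is exactly what is needed, since the refined set of cofactors remains Zariski-open and dense at each inductive step. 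One cosmetic remark: the opening sentence of your second paragraph announces a reduction to $d = (k+1)^2$, but what you actually (correctly) do is handle general $d \ge (k+1)^2$ directly by allowing $\deg(q) = d - d_0 \ge \binom{k+2}{2}$; the dimension count $\dim_\C(U_\C U_\C) = 2\binom{k+2}{2} + 2\binom{k+1}{2} + 1 = 2(k+1)^2 + 1$ still yields $\dim(F) = \binom{2(k+1)+1}{2} - \dim_\C(U_\C U_\C) = k$, so this is fine; just reword to avoid implying a reduction you do not perform.
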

\begin{proof}[Sketch of proof] 
It is enough to prove the theorem for $d = (k+1)^2$.
If $Q_{2d}$ denotes the (semialgebraic) set of all $f \in \interior(\Sigma_{2d})$ with a face of the desired form in $\Gram(f)$, 
with the same argumentation as in Theorem \ref{thm:generic_herm_case}, it suffices to show that $Q_{2d}$ is dense in $\Sigma_{2d}$. 
Let $e = \choose{k+2}{2}$ and $d' = \choose{k+1}{2} = d - e$.
If $h \in \interior(\Sigma_{2d})$, we have to find $\tilde{h} \in Q_{2d}$ 'close to' $h$.
We write $h = fg$ with $f \in \interior(\Sigma_{2d'})$ and $g \in \interior(\Sigma_{2e})$. 
From the Hermitian case we know that $P_{2d'}$ is dense in $\Sigma_{2d'}$. 
Therefore, we choose $\tilde{f} \in P_{2d'}$ 'close enough to' $f$ and a suitable cofactor $\tilde{g}$ which is 'close enough to' $g$, 
and such that $\tilde{f} \tilde{g}$ is a positive binary form with distinct roots with a polyhedral face of the desired form in its (symmetric) Gram spectrahedron (cf.~the construction in Theorem \ref{thm:construction_polyhedral_faces_real}). 
Then $\tilde{f}\tilde{g} \in Q_{2d}$ is 'close to' $h$.
\end{proof}


\end{document}